\newtheorem{thm}{Theorem}[section]
\newtheorem{prop}[thm]{Proposition}
\newtheorem{lem}[thm]{Lemma}
\newtheorem{cor}[thm]{Corollary}
\theoremstyle{definition}
\newtheorem{defn}[thm]{Definition}
\newtheorem{rmk}[thm]{Remark}
\newtheorem*{claim*}{Claim}
\newtheorem{ack}[thm]{Acknowledgments}
\newcommand{\mc}[1]{\mathcal{#1}}
\newcommand{\ms}[1]{\mathscr{#1}} 
\newcommand{\Zb}{\mathbb{Z}}
\newcommand{\Nb}{\mathbb{N}}
\newcommand{\U}{\mathcal{U}}
\newcommand{\Es}{\ms{E}}
\newcommand{\tdlc}{t.d.l.c.\@\xspace}
\newcommand{\tdlcsc}{t.d.l.c.s.c.\@\xspace}
\newcommand{\scou}{s.c.\@\xspace}
\newcommand{\defbold}{\textbf}
\newcommand{\normal}{\trianglelefteq}
\newcommand{\ssl}{/\!\!/}
\newcommand{\inv}{^{-1}}
\newcommand{\triv}{\{1\}}
\newcommand{\Homeo}{\mathrm{Homeo}}
\newcommand{\Stab}{\mathrm{Stab}}
\newcommand{\grp}[1]{\langle #1 \rangle}
\newcommand{\ol}[1]{\overline{#1}}
\newcommand{\Sym}{\mathop{\rm Sym}\nolimits}
\newcommand{\Res}[1]{\mathop{\rm Res}_{#1}\nolimits}
\newcommand{\wt}[1]{\widetilde{#1}}
\newcommand{\wh}[1]{\widehat{#1}}
\begin{document}

\title[Homomorphisms into t.d.l.c. groups]{Homomorphisms into totally disconnected, locally compact groups with dense image}
\author{Colin D. Reid}
\address{  University of Newcastle,
   School of Mathematical and Physical Sciences,
   University Drive,
   Callaghan NSW 2308, Australia}
\email{colin@reidit.net}
\thanks{The first named author is an ARC DECRA fellow.  Research supported in part by ARC Discovery Project DP120100996.}
\author{Phillip R. Wesolek}
\address{Binghamton University,
Department of Mathematical Sciences,
PO Box 6000,
Binghamton New York 13902 USA}
\email{pwesolek@binghamton.edu}
\thanks{The second author was supported by ERC grant \#278469.}

\begin{abstract}
Let $\phi: G \rightarrow H$ be a group homomorphism such that $H$ is a totally disconnected locally compact (\tdlc) group and the image of $\phi$ is dense.  We show that all such homomorphisms arise as completions of $G$ with respect to uniformities of a particular kind.  Moreover, $H$ is determined up to a compact normal subgroup by the pair $(G,\phi\inv(L))$, where $L$ is a compact open subgroup of $H$.  These results generalize the well-known properties of profinite completions to the locally compact setting.  
\end{abstract}

\maketitle

\tableofcontents

\addtocontents{toc}{\protect\setcounter{tocdepth}{1}}

\section{Introduction}
For $G$ a (topological) group, the \defbold{profinite completion} $\hat{G}$ of $G$ is the inverse limit of the finite (continuous) quotients of $G$. One can obtain other profinite groups by forming the inverse limit of a suitable subset of the set of finite quotients of $G$. Such a profinite group $H$ is always a quotient of $\hat{G}$, and obviously the composition map $G\rightarrow \hat{G}\rightarrow H$ has dense image.

On the other hand, given a (topological) group $G$, one can ask which profinite groups $H$ admit a (continuous) homomorphism $\psi:G\rightarrow H$ with dense image. Letting $\iota:G\rightarrow \hat{G}$ be the canonical inclusion, it turns out that there is always a continuous quotient map $\wt{\psi}:\hat{G}\rightarrow H$ such that $\psi=\wt{\psi}\circ \iota$; cf. \cite[Lemma 3.2.1]{RZ00}. In this way one obtains a complete description of all profinite groups $H$ and homomorphisms $\psi:G\rightarrow H$ such that the image of $G$ is dense in $H$: all such groups and morphisms arise exactly by forming inverse limits of suitable sets of finite quotients of $G$. 

The aim of the present paper is to extend the well-known and well-established description of homomorphisms to profinite groups with dense image to homomorphisms into totally disconnected locally compact (\tdlc) groups with dense image. That is to say, we will develop a theory of \textit{\tdlc completions}. Using the language of uniformities, we shall see that this theory generalizes the profinite case.

Our approach generalizes previous work by Schlichting (\cite{Schlichting}) and Belyaev (\cite[\S7]{Belyaev}); Schlichting's completion has also been studied in subsequent work, e.g. \cite{SW_13}.  The novel contributions of this work are to present a unified theory of \tdlc completions and to identify properties that hold for every completion.

\subsection{Statement of results}
We shall state our results in the setting of Hausdorff topological groups. If one prefers, the group $G$ to be completed can be taken to be discrete. The topological group setting merely allows for finer control over the completions; for example, given a topological group $G$, there is often an interesting difference between completions of $G$ as a topological group and completions of $G$ as a discrete group.

\begin{defn}
For a topological group $G$, a \defbold{(\tdlc) completion map} is a continuous homomorphism $\psi:G\rightarrow H$ with dense image such that $H$ is a \tdlc group. We call $H$ a \defbold{\tdlc completion} of $G$.
\end{defn}

All \tdlc completions arise as completions with respect to a certain family of uniformities.
\begin{defn}
Let $G$ be a group and let $\mc{S}$ be a set of open subgroups of $G$.  We say that $\mc{S}$ is a \defbold{$G$-stable local filter} if the following conditions hold:
\begin{enumerate}[(a)]
\item $\mc{S}$ is non-empty;
\item Any two elements of $\mc{S}$ are commensurate;
\item Given a finite subset $\{V_1,\dots,V_n\}$ of $\mc{S}$, then $\bigcap^n_{i=1}V_i \in \mc{S}$, and given $V \le W \le G$ such that $|W:V|$ is finite, then $V \in \mc{S}$ implies $W \in \mc{S}$;
\item Given $V \in \mc{S}$ and $g \in G$, then $gVg\inv \in \mc{S}$.
\end{enumerate}
\end{defn}

Each $G$-stable local filter $\mc{S}$ is a basis at $1$ for a (not necessarily Hausdorff) group topology on $G$, and thus, there is an associated right uniformity $\Phi_r(\mc{S})$ on $G$.  The completion with respect to this uniformity, denoted by $\hat{G}_{\mc{S}}$, turns out to be a \tdlc group (Theorem~\ref{thm:completion:def}); we denote by $\beta_{G,\mc{S}}:G\rightarrow \hat{G}_{\mc{S}}$ the canonical inclusion homomorphism. All \tdlc completions moreover arise in this way.

\begin{thm}[see Theorem~\ref{thm:completion:characterization}]
If $G$ is a topological group and $\phi: G \rightarrow H$ is a \tdlc completion map, then there is a $G$-stable local filter $\mc{S}$ and a unique topological group isomorphism $\psi: \hat{G}_{\mc{S}} \rightarrow H$ such that $\phi = \psi \circ \beta_{(G,\mc{S})}$.
\end{thm}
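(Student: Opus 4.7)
The plan is to recover a suitable filter $\mc{S}$ directly from the \tdlc completion $H$, using van Dantzig's theorem to produce the right candidate. Let $\mc{V}$ denote the set of compact open subgroups of $H$, which forms a neighborhood basis of the identity in $H$. For each $V \in \mc{V}$, set $S_V = \phi\inv(V)$; this is an open subgroup of $G$ because $\phi$ is continuous, and $V$ is commensurated in $H$ by all of $H$ (any two compact open subgroups of a \tdlc group are commensurate), so $S_V$ is commensurated by $G$. I then let $\mc{S}$ be the filter on $G$ generated by $\{S_V : V \in \mc{V}\}$, and check directly against the definition given earlier in the paper that $\mc{S}$ is a $G$-stable local filter: openness of each $S_V$, closure of $\mc{V}$ under finite intersection, and the commensuration property of $\mc{V}$ in $H$ together with density of $\phi(G)$ deliver all the required axioms.

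The crux is identifying the two uniformities. The canonical uniformity on $H$ has a base of entourages of the form $\{(x,y) : xy\inv \in V\}$ with $V \in \mc{V}$, so the initial uniformity on $G$ induced by $\phi$ has a base of entourages $\{(g,h) : gh\inv \in S_V\}$. By construction (as recorded in Proposition~\ref{prop:belyaev:def} of the excerpt) this is exactly the uniformity associated with $\mc{S}$, and hence $\hat{G}_{\mc{S}}$ is the Hausdorff completion of $G$ with respect to the initial uniformity of $\phi$. Since $H$ is locally compact and therefore complete, and $\phi(G)$ is dense in $H$, the universal property of uniform completion yields a unique uniformly continuous extension $\psi : \hat{G}_{\mc{S}} \to H$ with $\phi = \psi \circ \beta_{(G,\mc{S})}$. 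Surjectivity is immediate (the image of $\psi$ is closed and dense), and injectivity follows because the uniformity on $G$ is initial for $\phi$, so two Cauchy filters on $G$ have the same limit in $\hat{G}_{\mc{S}}$ iff their $\phi$-pushforwards have the same limit in $H$.

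That $\psi$ is a topological group isomorphism is then routine: the group operations on $\hat{G}_{\mc{S}}$ are continuous extensions from $\beta_{(G,\mc{S})}(G)$, so $\psi$ is a homomorphism by density, and bicontinuity is forced by the matching of the two uniformities. Uniqueness of $\psi$ is automatic since $\beta_{(G,\mc{S})}(G)$ is dense in $\hat{G}_{\mc{S}}$ and $H$ is Hausdorff. The main obstacle I anticipate is verifying cleanly that the filter $\mc{S}$ produced from $\mc{V}$ really meets every axiom of a $G$-stable local filter in the paper's sense, rather than just being a filter of commensurated open subgroups; once this bookkeeping is complete, the rest is a direct application of Proposition~\ref{prop:belyaev:def} and the universal property of the Hausdorff completion.
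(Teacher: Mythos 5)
Your proposal is correct, and it reaches the conclusion by a genuinely different route from the paper. Both arguments start identically, taking $\mc{S}$ to be (the filter generated by) the preimages $\phi\inv(V)$ for $V \in \U(H)$, and both defer the verification of the $G$-stable local filter axioms to bookkeeping. The divergence is in how $\psi$ is produced. The paper constructs it by hand: for a minimal Cauchy filter $f$ it forms $\hat{f} = \bigcap\{\overline{\phi(Vg)} \mid Vg \in f\}$, shows by a compactness argument (together with the triviality of $\bigcap_{V}\overline{\phi(V)}$) that this is a singleton, and then separately checks injectivity, continuity, openness on the subgroups $\hat{V}_{\mc{S}}$, and surjectivity from the image being an open dense subgroup. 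You instead observe that $\mc{R}_{\mc{S}}$ is exactly the initial uniformity pulled back along $\phi$ from the right uniformity of $H$ (using van Dantzig to see that the entourages coming from compact open subgroups form a base), so that $\hat{G}_{\mc{S}}$ is the Hausdorff completion of $\phi(G)$ in its subspace uniformity; since locally compact groups are complete, $H$ is itself such a completion, and essential uniqueness of Hausdorff completions yields the uniform, hence topological, isomorphism, with the group structure transported by density. Your route buys brevity and makes the initiality of the construction transparent; it costs two standard but nontrivial external inputs (completeness of locally compact groups in their one-sided uniformities, and uniqueness of completions), whereas the paper's argument is self-contained given compactness and the open-subgroup argument.

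One caution: your justification of surjectivity, ``the image of $\psi$ is closed and dense,'' is not valid as stated---a uniformly continuous image of a complete space need not be closed (the identity from $\Qb$ with the discrete uniformity into $\Rb$ is a counterexample). Surjectivity should instead come from the fact that $H$, being complete and containing $\phi(G)$ densely, \emph{is} a completion of $\phi(G)$, so the comparison map out of $\hat{G}_{\mc{S}}$ is bijective; alternatively, note that $\psi(\hat{V}_{\mc{S}})$ is compact and contains the dense subgroup $\phi(V)$ of the corresponding compact open subgroup of $H$, hence equals it, so the image of $\psi$ is an open, dense, and therefore closed subgroup of $H$. Either repair is routine and does not affect the soundness of your argument.
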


We next consider completion maps $\phi: G \rightarrow H$ where a specified subgroup $U$ of $G$ is the preimage of a compact open subgroup of $H$.  In this case, there are two canonical completions of $G$ such that $U$ is the preimage of a compact open subgroup of the completion: the \defbold{Belyaev completion}, denoted by $\hat{G}_U $, and the \defbold{Schlichting completion}, denoted by $G \ssl U$. These completions are the `largest' and `smallest' completions in the following sense. We denote by $\beta_U:G\rightarrow \hat{G}_U$ and $\beta_{G/U}:G\rightarrow G\ssl U$ the canonical inclusion homomorphisms.

\begin{thm}[see Theorem~\ref{thm:completion:sandwich}]\label{thm:intro_sandwich}
Suppose that $G$ is a group and that $\phi: G \rightarrow H$ is a \tdlc completion map. Letting $U\leq G$ be the preimage of some compact open subgroup of $H$, then there are unique continuous quotient maps $\psi_1: \hat{G}_U \rightarrow H$ and $\psi_2: H \rightarrow G\ssl U$ with compact kernels such that the following diagram commutes:
\[
\xymatrixcolsep{3pc}\xymatrix{
& G\ar_{\beta_U}[ld] \ar^{\phi}[d]  \ar^{\beta_{G/U}}[rd]& \\
\hat{G}_U  \ar_{\psi_1}[r] & H \ar_{\psi_2}[r]& G\ssl U. }
\]
\end{thm}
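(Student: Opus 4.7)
The plan is to invoke Theorem~\ref{thm:completion:characterization}: every \tdlc completion $\phi: G \to H$ is canonically isomorphic to $\hat{G}_{\mc{S}}$ for a unique $G$-stable local filter $\mc{S}$. The completions $\hat{G}_U$ and $G\ssl U$ correspond to distinguished filters $\mc{S}_B$ and $\mc{S}_S$; the compatibility condition $U = \phi\inv(L)$ for a compact open $L\leq H$ should force $\mc{S}$ to sit between $\mc{S}_S$ and $\mc{S}_B$ in the refinement order. Comparability of $G$-stable local filters then yields continuous surjective homomorphisms between the associated completions, which furnish the candidates $\psi_1$ and $\psi_2$.

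For $\psi_1: \hat{G}_U \to H$, let $\hat{U}$ denote the closure of $\beta_U(U)$ in $\hat{G}_U$, which is a compact open subgroup. I would build $\psi_1$ by extending the map $\phi\circ \beta_U\inv$ (defined on the dense subgroup $\beta_U(G)$) continuously to $\hat{G}_U$; uniform continuity holds because $U=\phi\inv(L)$ forces any identity neighborhood in $H$ to pull back under $\phi$ to a subgroup commensurate with $U$, hence to a member of the Belyaev filter. The extension has dense image and factors through the locally compact group $H$, so is surjective, and it satisfies $\psi_1\circ\beta_U=\phi$ by construction. For compactness of $\ker(\psi_1)$, I would show $\psi_1\inv(L)=\hat{U}$: the preimage is an open subgroup containing $\hat{U}$, hence a union of cosets of $\hat{U}$; by density each coset meets $\beta_U(G)$, but $\beta_U(G)\cap \psi_1\inv(L)=\beta_U(U)\subseteq \hat{U}$, so only one coset occurs. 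Therefore $\ker(\psi_1)\subseteq \hat{U}$ is compact.

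For $\psi_2: H \to G\ssl U$, I would use the realization of $G\ssl U$ as the closure of the image of $G$ in $\Sym(G/U)$ under the pointwise convergence topology. Since $\phi\inv(L)=U$, the induced map $G/U\to H/L$ is a $G$-equivariant bijection, identifying $\Sym(H/L)$ with $\Sym(G/U)$ compatibly with the $G$-actions. The continuous $H$-action on $H/L$ has kernel $K:=\bigcap_{h\in H}hLh\inv$, a closed normal subgroup of $H$ contained in the compact set $L$, hence compact. The induced map $H/K\injects \Sym(H/L)$ is a continuous injection whose restriction to the compact open subgroup $L/K$ is a bijection onto the stabilizer of the basepoint $L\in H/L$, hence a topological embedding; this forces the whole map to be a topological embedding with locally compact, hence closed, image. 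The image contains $\phi(G)K/K$, which is dense in both $H/K$ and $G\ssl U$, so image $= G\ssl U$. This yields $\psi_2$ with compact kernel $K$ and $\psi_2\circ \phi=\beta_{G/U}$.

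Uniqueness of both maps is immediate: each is a continuous homomorphism into a Hausdorff topological group prescribed by the commutativity of the diagram on a dense subgroup ($\beta_U(G)$ in $\hat{G}_U$, respectively $\phi(G)$ in $H$). The main obstacle I anticipate is justifying the uniform continuity needed to construct $\psi_1$, which amounts to verifying that the Belyaev filter is genuinely the finest $G$-stable local filter compatible with $U$. Once this universality is in hand—via a direct comparison of generating entourages, plausibly using that the Belyaev filter is generated by finite intersections of $G$-conjugates of finite-index subgroups of $U$—the remainder of the argument reduces to careful but routine manipulation of density, openness of compact open subgroups, and closedness of continuous group homomorphisms with locally compact image.
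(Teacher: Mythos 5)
Your proposal is correct, and its opening plan (apply Theorem~\ref{thm:completion:characterization} to identify $H$ with $\hat{G}_{\mc{S}}$ for $\mc{S}=\{\phi\inv(V)\mid V\in\U(H)\}$, note $\mc{S}_{G/U}\subseteq\mc{S}\subseteq\mc{S}_B$, and extract the two maps from comparability of filters) is exactly the paper's proof, which runs everything through Proposition~\ref{prop:completions:comparison}. Where you diverge is in how you actually realize $\psi_1$ and $\psi_2$: you construct $\psi_1$ by uniformly continuous extension of $\phi\circ\beta_U\inv$ from the dense image (this is in substance the universal property of the Belyaev completion, Theorem~\ref{Belyaev:universal}, specialized to a completion map), and you construct $\psi_2$ concretely via the action of $H$ on $H/L\cong G/U$ inside $\Sym(G/U)$ with kernel $\bigcap_{h\in H}hLh\inv$. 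The latter is essentially the Shalom--Willis argument, which the paper instead \emph{derives} from Theorem~\ref{thm:completion:sandwich} together with Proposition~\ref{prop:sch_description} as Corollary~\ref{Schlichting:universal}; your route gives an independent proof of that corollary and makes the compact kernels visible ($\ker\psi_1\le\hat{U}$ via the single-coset argument, $\ker\psi_2=\bigcap_h hLh\inv\le L$), at the cost of redoing by hand what Proposition~\ref{prop:completions:comparison} packages abstractly. Finally, the ``main obstacle'' you flag is a non-issue: $\mc{S}_B=[U]$ is by definition the whole commensurability class, and for $V\in\U(H)$ with $V\le L$ the induced injection of coset spaces $U/\phi\inv(V)\injects L/V$ shows $\phi\inv(V)$ has finite index in $U$, hence lies in $\mc{S}_B$; this is all the uniform continuity requires.
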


We conclude by identifying several properties that are ``independent" of the completion. Doing so requires identifying a notion of size. Two subgroups $U$ and $V$ of a group $G$ are \defbold{commensurate} if $U\cap V$ has finite index in both $U$ and $V$. A subgroup $W\leq G$ is \textbf{commensurated} in $G$ if $W$ is commensurate with $gWg^{-1}$ for all $g\in G$. We write $[U]$ for the set of closed subgroups commensurate with $U$. We call the collection $[U]$ a \textbf{size} if some (equivalently, any) $W\in [U]$ is commensurated; for $[U]$ a size, observe that $[U] = [gUg\inv]$ for all $g \in G$.

A compact open subgroup $R$ of a \tdlc group $H$ is commensurated in $H$. Given a completion map $\psi:G\rightarrow H$, the preimage $U := \psi\inv(R)$ is an open subgroup of $G$ that is commensurated in $G$. We thus obtain a size $[U]$, and the size $[U]$ does not depend on the choice of $R$. We say that $[U]$ is the \defbold{size} of $\psi$.  

\begin{thm}[see \S\ref{sec:invariant}]\label{thmintro:invariant}
Let $G$ be a topological group.  For each of the following properties, either every completion of $G$ of size $\alpha$ has the property, or every completion of $G$ of size $\alpha$ fails to have the property. 
\begin{enumerate}[(1)]
\item Being $\sigma$-compact.
\item Being compactly generated.
\item Being amenable.
\item Being uniscalar.
\item Having a quotient isomorphic to $N$ where $N$ is any specified \tdlc group that has no non-trivial compact normal subgroups.
\end{enumerate}
\end{thm}

\begin{thm}[See \S\ref{sec:invariant}]
Let $G$ be a topological group. For each of the following properties, either every second countable completion of $G$ of size $\alpha$ has the property, or every second countable completion of $G$ of size $\alpha$ fails to have the property. 
\begin{enumerate}[(1)]
\item Being elementary.
\item Being elementary with transfinite rank $\beta$.
\end{enumerate}

\end{thm}

\begin{ack}
The first named author would like to thank Aleksander Iwanow for pointing out the article \cite{Belyaev} in response to an earlier preprint.
\end{ack}

\section{Preliminaries}

A quotient of a topological group must have closed kernel (such that the resulting quotient topology is Hausdorff). Topological group isomorphism is denoted by $\simeq$. We use ``t.d.", ``l.c.", and ``s.c." for ``totally disconnected", ``locally compact", and ``second countable", respectively. For a topological group $G$, the set $\U(G)$ is defined to be the collection of compact open subgroups of $G$.

\begin{defn}
A \defbold{Hecke pair} is a pair of groups $(G,U)$ where $G$ is a topological group and $U$ is an open subgroup of $G$ that is commensurated.
\end{defn}

\subsection{Uniformities}
Our approach to completions is via uniform spaces; our discussion of uniform spaces follows \cite{B_top1_89}.

\begin{defn}
Let $X$ be a set.  A \defbold{uniformity} is a set $\Phi$ of binary relations on $X$, called \defbold{entourages}, with the following properties:
\begin{enumerate}[(a)]
\item Each $A \in \Phi$ is reflexive, that is, $\{(x,x) \mid x \in X\} \subseteq A$.
\item For all $A,B \in \Phi$, there exists $C \in \Phi$ such that $C \subseteq A \cap B$.
\item For all $A \in \Phi$, there exists $B \in \Phi$ such that
\[
B \circ B := \{(x,z) \mid \exists y \in X: \{(x,y),(y,z)\} \subseteq B \}
\]
is a subset of $A$.
\item For all $A \in \Phi$, there exists $B \in \Phi$ such that the set $\{(y,x) \mid (x,y) \in B\}$ is a subset of $A$.
\end{enumerate}
\end{defn}

A set with a uniformity is called a \defbold{uniform space}.  Two uniformities $\Phi,\Phi'$ on a set $X$ are \defbold{equivalent} if every $A \in \Phi$ contains some $A' \in \Phi'$ and vice versa.

\begin{defn} Let $(X,\Phi)$ be a uniform space. A filter $f$ of subsets of $X$ is called a \defbold{minimal Cauchy filter} if $f$ is a $\subseteq$-least filter such that for all $U\in \Phi$ there is $A\in f$ with $A\times A\subseteq U$.
\end{defn}

A basis at $1$ of a topological group gives rise to two canonical uniformities:
\begin{defn} Suppose that $G$ is a topological group (not necessarily Hausdorff) and let $\mc{B}$ be a basis at $1$. The \defbold{left $\mc{B}$-uniformity} $\Phi_l(\mc{B})$ consists of entourages of the form
\[
U_l:=\{(x,y)\mid x^{-1}y\in U\} \quad (U \in \mc{B}).
\]
The \defbold{right $\mc{B}$-uniformity} $\Phi_r(\mc{B})$ consists of entourages of the form
\[
U_r:=\{(x,y)\mid xy^{-1}\in U\} \quad (U \in \mc{B}).
\]
\end{defn}

For both the left and right uniformities, the uniformity itself depends on the choice of basis, but the equivalence class of the uniformity does not. We will thus omit references to the basis where it is not significant.  (For definiteness, one can take $\mc{B}$ to be the set of all open identity neighborhoods, but it is often convenient to take another basis.) We will also use the definite article when referring to the left or the right uniformity.

\begin{defn}[{\cite[II.3.7]{B_top1_89}}] The \defbold{completion} of a uniform space $(X,\Phi)$ is defined to be 
\[
\hat{X}:=\{f\mid f\text{ is a minimal Cauchy filter}\}.
\]
along with the uniformity $\hat{\Phi}$ given by entourages of the form
\[
\hat{U}:=\{(f,g)\mid \exists A\in f\cap g\text{ with } A\times A\subseteq U\}
\]
where $U\in \Phi$.
\end{defn}
There is a canonical, continuous completion map $\beta:X\rightarrow \hat{X}$ which has dense image, defined by $x\mapsto f_x$ where $f_x$ is the minimal Cauchy filter containing the neighborhoods of $x$.  Note that as a topological space, $\hat{X}$ is determined by the equivalence class of $\Phi$.

In particular, if $G$ is a topological group equipped with the right uniformity, we let $\beta: G \rightarrow \hat{G}$ be the completion map associated to this uniformity.  Since $\beta$ has dense image, there is at most one way to equip $\hat{G}$ with a continuous group multiplication and inverse such that $\beta$ is a homomorphism; if these exist, we can say that $\hat{G}$ is a topological group in a canonical sense.  The completion $\hat{G}$ admits such a group multiplication exactly when the left and right uniformities are equivalent; equivalently, the inverse function preserves the set of minimal $\Phi_r$-Cauchy filters.

\begin{thm}[{\cite[III.3.4 Theorem 1]{B_top1_89}}]\label{thm:completions} Suppose that $G$ is a topological group and that $\Phi_r$ is the right uniformity. The completion $\hat{G}$ is a topological group if and only if the inverse map carries minimal $\Phi_r$-Cauchy filters to minimal $\Phi_r$-Cauchy filters.
\end{thm}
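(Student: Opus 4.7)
The plan is to prove the biconditional by separating the two directions. The forward direction is essentially an unpacking of continuity of inversion, while the backward direction is where the real work lies: one must build the group structure on $\hat{G}$ out of uniform-space data, and the hypothesis on minimal Cauchy filters is used precisely to bridge the left and right uniformities.

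For the forward direction, assume $\hat{G}$ is a topological group, so its inverse map $\hat{\iota}$ is continuous and restricts along the canonical dense embedding $i:G\to\hat{G}$ to inversion on $G$. Given a minimal $\Phi_r$-Cauchy filter $f$ on $G$, the image filter $i(f)$ converges in $\hat{G}$ to the point $\tilde{f}$ associated to $f$, so by continuity of $\hat{\iota}$ the pushforward $i(f^{-1})=\hat{\iota}(i(f))$ converges to $\tilde{f}^{\,-1}$. A convergent filter is Cauchy in $\hat{\Phi}_r$, and because $i$ is a uniform embedding, $f^{-1}$ is itself Cauchy in $\Phi_r$.

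For the backward direction, I would first extend inversion. If $f$ is a minimal Cauchy filter, then by hypothesis the filter $f^{-1}$ generated by $\{A^{-1}:A\in f\}$ is Cauchy, and it contains a unique minimal Cauchy filter $\hat{\iota}(f)$. Since inversion on $G$ is involutive, so is $\hat{\iota}$, and its continuity is checked directly from the definition of the entourages $\hat{U}$. Next, for $g\in G$ the right translation $R_g:x\mapsto xg$ satisfies $R_g(x)R_g(y)^{-1}=xy^{-1}$, so it is a $\Phi_r$-uniform isometry of $G$ and extends uniquely to $\hat{R}_g:\hat{G}\to\hat{G}$. Composing with $\hat{\iota}$ gives extensions of left translations $L_g=\hat{\iota}\circ \hat{R}_{g^{-1}}\circ\hat{\iota}$, defining multiplications $\hat{G}\times G\to\hat{G}$ and $G\times\hat{G}\to\hat{G}$.

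The main obstacle is extending to a jointly continuous operation $\hat{G}\times\hat{G}\to\hat{G}$, since multiplication is not uniformly continuous with respect to $\Phi_r\times\Phi_r$ in general. My approach would be: given $\xi,\eta\in\hat{G}$ with minimal Cauchy filters $f,g$, show that the product filter generated by $\{AB:A\in f,\,B\in g\}$ is $\Phi_r$-Cauchy, and then take $\xi\eta$ to be the associated point of $\hat{G}$. The key calculation is $(ab)(a'b')^{-1}=a(bb'^{-1})a'^{-1}$: the middle factor is small by Cauchyness of $g$, while controlling the conjugation by $a$ requires passing through the left uniformity, and this is where the already-established extension of inversion is used essentially. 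Once multiplication is defined, joint continuity, associativity, and the group axioms on $\hat{G}$ follow from density of $G$ and the group axioms on $G$.
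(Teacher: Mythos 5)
This theorem is not proved in the paper at all: it is imported verbatim from Bourbaki (\emph{Topologie g\'en\'erale}, III.3.4, Th\'eor\`eme 1), so there is no in-paper argument to compare against. Your sketch reconstructs exactly the standard Bourbaki proof: the forward direction by continuity of inversion plus the fact that a convergent filter is Cauchy and $i$ is a uniform embedding; the backward direction by extending inversion via the hypothesis, extending right (hence left) translations as uniform isomorphisms, and then proving that the filter generated by the products $AB$ is Cauchy via the identity $(ab)(a'b')^{-1}=a(bb'^{-1})a'^{-1}$, using the hypothesis to pass between the left and right uniformities. That is the right decomposition and the right crux. Two places in your outline are asserted rather than argued and are where the real work sits if you were to write this out: (i) continuity of the extended inversion is not immediate from the entourages $\hat{U}$, since inversion is uniformly continuous only as a map from the right uniformity to the left one; you need the hypothesis to identify the two completions (this is precisely the role played by Lemma~\ref{lem:left_is_right} in the paper's discrete-filter setting) before the extension is even a self-map of $\hat{G}$ compatible with its topology; and (ii) joint continuity of the extended multiplication does not ``follow from density of $G$'' --- it has to be extracted from the same entourage estimates that show the product filter is Cauchy, namely writing $a=a_0(a_0^{-1}a)$ with $a_0^{-1}a$ small in the left uniformity so that the conjugation $a(\cdot)a'^{-1}$ is uniformly controlled as $a,a'$ range over a single small set. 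With those two steps filled in, your argument is the classical one.
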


\begin{thm}[{\cite[III.3.4 Theorem 1]{B_top1_89}}] Suppose that $G$ is a topological group, $\Phi_r$ is the right uniformity, and the completion $\hat{G}$ is a topological group. Then the following hold:
\begin{enumerate}[(1)]
\item The map $\beta:G\rightarrow \hat{G}$ is a continuous homomorphism with dense image.
\item Multiplication on $\hat{G}$ is defined as follows: given $f,f' \in \hat{G}$, then $ff'$ is the minimal Cauchy filter of subsets of $G$ generated by sets $AB \subset G$ where $A \in f$ and $B \in f'$.
\end{enumerate}
\end{thm}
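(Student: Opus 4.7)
The plan is to upgrade Theorem~\ref{thm:completions} — which provides the existence of the inverse extension — to a full group structure on $\hat G$, by defining multiplication directly through the filter product appearing in statement~(2) and then deducing statement~(1) as a corollary of the uniform-space construction. Concretely, given $f,f' \in \hat G$, the first step is to check that $\mathcal{B}(f,f'):=\{AB\mid A\in f,\ B\in f'\}$ is a filter base on $G$: the containment $(A_1\cap A_2)(B_1\cap B_2)\subseteq A_1B_1\cap A_2B_2$ handles pairwise intersection, and non-emptiness is immediate. Let $g$ denote the filter generated by $\mathcal{B}(f,f')$; the product $f\cdot f'$ will be defined as the unique minimal $\Phi_r$-Cauchy filter contained in $g$, which exists as soon as $g$ is shown to be $\Phi_r$-Cauchy (this minimal-refinement construction is standard in any uniform space, cf. \cite{B_top1_89}).

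The technical heart is verifying that $g$ is $\Phi_r$-Cauchy. Given a neighborhood $W$ of $1$, choose a symmetric neighborhood $V$ with $V^3\subseteq W$. Since $f'$ is minimally $\Phi_r$-Cauchy, pick $B\in f'$ with $BB^{-1}\subseteq V$. The delicate point is to then select $A\in f$ so that the expression
\[
(AB)(AB)^{-1}=A(BB^{-1})A^{-1}\subseteq AVA^{-1}
\]
lies in $W$; this requires uniform control of the conjugation $a\mapsto aVa^{-1}$ as $a$ varies over the Cauchy-small set $A$. Here one invokes the hypothesis of Theorem~\ref{thm:completions}: the image of $f$ under inversion is $\Phi_r$-Cauchy, which is equivalent to saying $f$ is Cauchy for the \emph{left} uniformity as well, and this two-sided Cauchy property yields an $A\in f$ with both $AA^{-1}\subseteq V$ and $A^{-1}A\subseteq V$, giving $AVA^{-1}\subseteq V^3\subseteq W$. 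Once this is established for $g$, minimality passes to $f\cdot f'$, and the assignment $(f,f')\mapsto f\cdot f'$ is well-defined.

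The remaining steps are bookkeeping. Group axioms lift from $G$: the key identity $f_x\cdot f_y=f_{xy}$ for $x,y\in G$ follows directly by comparing filter bases of right-translates $Ux,Vy$ of basis neighborhoods of $1$, which together with density of $i(G)$ in $\hat G$ propagates associativity, identity, and inverses from $G$ to $\hat G$. Continuity of multiplication is proved by the same Cauchy estimate as above, now at the level of entourages $\hat W_r$, while continuity of inversion is exactly the content of Theorem~\ref{thm:completions}. For (1), the map $i\colon X\to\hat X$ attached to any uniform space is automatically uniformly continuous and has dense image (the filter $f_x$ of neighborhoods of $x$ witnesses, for each entourage $\hat U_r$, the proximity of $i(x)$ to any $f\ni A$ with $x\in A\subseteq U_r$-ball); combined with $f_x\cdot f_y=f_{xy}$, this makes $i$ a continuous homomorphism with dense image. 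The main obstacle throughout is the asymmetry just described: without the inversion hypothesis from Theorem~\ref{thm:completions}, the conjugation $AVA^{-1}$ cannot be tamed, and the product filter need not be Cauchy — this is precisely why the hypothesis is not only sufficient but (as the preceding theorem records) necessary.
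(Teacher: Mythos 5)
Your overall architecture is reasonable (and, for what it is worth, the paper gives no proof of this statement at all --- it is imported verbatim from Bourbaki III.3.4, so I am judging your argument on its own terms). The filter-base check, the identity $f_x\cdot f_y=f_{xy}$, and the density/continuity of $i$ are all fine. But the step you correctly identify as the technical heart contains a genuine error. You claim that choosing $A\in f$ with $AA^{-1}\subseteq V$ and $A^{-1}A\subseteq V$ forces $AVA^{-1}\subseteq V^3$. This is false: a singleton $A=\{a\}$ satisfies both smallness conditions for \emph{every} $V$, yet $aVa^{-1}$ need not lie in any bounded power of $V$. For a counterexample in the very setting of this paper, take $G=\Qp\rtimes\grp{t}$ with $t$ acting by multiplication by $p$, let $V=\mathbb{Z}_p$ (so $V^3=V$) and $A=\{t^{-1}\}$: then $AVA^{-1}=p^{-1}\mathbb{Z}_p\not\subseteq V^3$. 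Two-sided Cauchy smallness of $A$ gives no control whatsoever over the inner automorphisms implemented by the \emph{elements} of $A$; only groups with small invariant neighborhoods admit the uniform conjugation bound you invoke, and general topological groups (even \tdlc ones) do not. So as written the product filter is not shown to be Cauchy, and the construction of the multiplication collapses.

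The repair is to reverse your order of quantifiers. Given a neighborhood $W$ of $1$, first choose $W'$ with $W'^3\subseteq W$ and $A\in f$ with $AA^{-1}\subseteq W'$; then fix a \emph{single} point $a_0\in A$ and use continuity of the fixed inner automorphism $v\mapsto a_0va_0^{-1}$ to find a neighborhood $V$ of $1$ with $a_0Va_0^{-1}\subseteq W'$; only then choose $B\in f'$ with $BB^{-1}\subseteq V$. Writing $a_1va_2^{-1}=(a_1a_0^{-1})(a_0va_0^{-1})(a_0a_2^{-1})$ for $a_1,a_2\in A$ and $v\in BB^{-1}$ gives $ABB^{-1}A^{-1}\subseteq W'^3\subseteq W$. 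Note that this uses only the right-Cauchy property of $f$ plus continuity of multiplication in $G$; the inversion hypothesis of Theorem~\ref{thm:completions} is genuinely needed, but for a different purpose than the one you assign to it --- namely to guarantee that every element of $\hat{G}$ has an inverse and that inversion is continuous on $\hat{G}$, not to tame the conjugation appearing in the Cauchy estimate for the product. You should also record why minimal right-Cauchy filters have a base of sets of the form $VM$ with $V$ an identity neighborhood, which is what makes the "minimal Cauchy filter generated by the $AB$" well defined.
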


\section{A general construction for completions}\label{sec:general}

We describe a procedure for producing \tdlc completions.  The main idea in the construction is to form uniform completions of $G$ with respect to a family of group topologies that are in general coarser than the natural topology of $G$.

\begin{defn}
Let $G$ be a group and let $\mc{S}$ be a set of open subgroups of $G$.  We say that $\mc{S}$ is a \defbold{$G$-stable local filter} if the following conditions hold:
\begin{enumerate}[(a)]
\item $\mc{S}$ is non-empty;
\item Any two elements of $\mc{S}$ are commensurate;
\item $\mc{S}$ is a filter in its commensurability class: that is, given a finite subset $\{V_1,\dots,V_n\}$ of $\mc{S}$, then $\bigcap^n_{i=1}V_i \in \mc{S}$, and given $V \le W \le G$ such that $|W:V|$ is finite, then $V \in \mc{S}$ implies $W \in \mc{S}$;
\item $\mc{S}$ is stable under conjugation in $G$: that is, given $V \in \mc{S}$ and $g \in G$, then $gVg\inv \in \mc{S}$.
\end{enumerate}
We say $\mc{S}$ is a \defbold{$G$-stable local filter of size $[U]$} if in addition $\mc{S} \subseteq [U]$.
\end{defn}

\begin{rmk}
If $\mc{S}$ is a $G$-stable local filter, then $\mc{S}$ is a filter of $[V]$ for any $V\in \mc{S}$. Furthermore, $[V]$ must be stable under the conjugation action of $G$. 
\end{rmk}

\begin{lem}\label{lem:localfilter_topology}
Let $G$ be a group and let $\mc{S}$ be a $G$-stable local filter.  Then $\mc{S}$ is a basis at $1$ for a (not necessarily Hausdorff) group topology on $G$.
\end{lem}

\begin{proof}
Let $\mc{T}$ be the topology generated by all right translates of elements of $\mc{S}$.  Since $\mc{S}$ is invariant under conjugation in $G$, it is clear that every left coset of an element of $\mc{S}$ is a union of right cosets of elements of $\mc{S}$ and vice versa; hence $\mc{T}$ is invariant under inverses.  We see that the multiplication map $m: (g,h) \mapsto gh$ is continuous with respect to $\mc{T}$ by observing that given $U \in \mc{S}$ and $g,h \in G$, then $m\inv(Ugh)$ contains the open neighborhood $(Ug) \times (g\inv Ugh)$ of $(g,h)$.
\end{proof}

We remark that the largest quotient on which $\mc{S}$ induces a Hausdorff group topology is $G/K$ where $K$ is the normal subgroup $\bigcap_{U \in \mc{S}}U$.

Equipping $G$ with the group topology induced from $\mc{S}$, we can form the left and right uniformities $\Phi_l(\mc{S})$ and $\Phi_r(\mc{S})$.  

\begin{defn}
Let $\mc{S}$ be a $G$-stable local filter.  We define a \defbold{right $\mc{S}$-Cauchy filter} $f$ in $G$ to be a minimal Cauchy filter with respect to the uniformity $\Phi_r(\mc{S})$.  In other words, $f$ is a filter of subsets of $G$ with the following properties:
\begin{enumerate}[(a)]
\item For every $V \in \mc{S}$, there is exactly one right coset $Vg$ of $V$ in $G$ such that $Vg \in f$;
\item Every element of $f$ contains a right coset of some element of $\mc{S}$.
\end{enumerate}
\end{defn}

Left $\mc{S}$-Cauchy filters are defined similarly with respect to the left uniformity.  Notice that for each $g \in G$, there is a corresponding \defbold{principal} right $\mc{S}$-Cauchy filter $f_g$ generated by $\{Vg \mid V \in \mc{S}\}$. Where the choice of $\mc{S}$ is clear, we will write `Cauchy' to mean `$\mc{S}$-Cauchy'.

The next series of results will establish that the hypotheses of Theorem~\ref{thm:completions} are satisifed, so completing $G$ with respect to $\Phi_r(\mc{S})$ produces a topological group.

\begin{lem}\label{lem:leftright_cosets}Let $G$ be a group, $N$ be a commensurated subgroup of $G$, and $g \in G$.  Then there are $h_1,\dots,h_n\in G$ such that for all $h \in G$, the set $Ng \cap hN$ is a (possibly empty) union of finitely many right cosets of $N\cap \bigcap_{i=1}^nh_iNh_i^{-1}$.\end{lem}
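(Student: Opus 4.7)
My approach is to exploit two facts: first, that $N$, being a finite-index subgroup of a commensurated subgroup $U$, is itself commensurated in $G$; second, that commensuration of $N$ by $g$ forces the double coset $NgN$ to consist of only finitely many right cosets of $N$.

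More precisely, the plan is as follows. Since $U$ is commensurated and $[U:N]<\infty$, one checks directly that $gNg\inv$ is commensurate with $N$ for every $g\in G$, so $N$ is commensurated. In particular, $N\cap g\inv Ng$ has finite index in $N$, and hence $|NgN/N| = [N : N\cap g\inv Ng]$ is finite. Choose $h_1,\dots,h_n\in G$ so that $NgN = \bigsqcup_{i=1}^{n} h_iN$, and set
\[
K := N\cap\bigcap_{i=1}^{n}h_iNh_i\inv.
\]
Since each $h_iNh_i\inv \cap N$ has finite index in $N$ (again by commensuration of $N$), the subgroup $K$ has finite index in $N$.

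Now suppose $Ng\cap hN\neq\emptyset$ and pick $x_0\in Ng\cap hN$. The standard calculation shows
\[
Ng\cap hN = (N\cap x_0 Nx_0\inv)x_0 = (N\cap hNh\inv)x_0,
\]
where the last equality uses $x_0\in hN$ (so $x_0Nx_0\inv = hNh\inv$). On the other hand, $h\in NgN$ forces $hN=h_iN$ for some $i$, and therefore $hNh\inv = h_iNh_i\inv$. Consequently $K\subseteq N\cap hNh\inv$, and since $K$ has finite index in $N$ it has finite index in $N\cap hNh\inv$. Writing $N\cap hNh\inv$ as a finite union of right cosets of $K$ and multiplying through by $x_0$ expresses $Ng\cap hN$ as a finite union of right cosets of $K$, completing the argument.

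The only step that requires more than a direct unpacking of definitions is the verification that $N$ inherits commensuration from $U$; I do not expect it to be a real obstacle, as it is a standard consequence of the fact that intersections of finite-index subgroups with commensurate subgroups remain commensurate. Everything else is bookkeeping with cosets and the observation that $N\cap hNh\inv$ depends only on the coset $hN$, so there are only finitely many possibilities across all $h$ with $Ng\cap hN\neq\emptyset$.
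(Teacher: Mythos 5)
Your proof is correct and follows essentially the same route as the paper's: both reduce to the finitely many left cosets $h_iN$ of $N$ that meet $Ng$ (equivalently, are contained in $NgN$) and take the $h_i$-conjugate intersection $N\cap\bigcap_{i}h_iNh_i^{-1}$. Your explicit identity $Ng\cap hN=(N\cap hNh^{-1})x_0$ is a slightly cleaner finish than the paper's two-sided stabilizer argument; the only slip is that the number of \emph{left} cosets of $N$ in $NgN$ is $[N:N\cap gNg^{-1}]$ rather than $[N:N\cap g^{-1}Ng]$, which is immaterial since both indices are finite by commensuration of $N$.
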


\begin{proof} Suppose $Ng\cap hN\neq \emptyset$ and put $R: = N \cap g\inv Ng$. For all $h \in G$, we have $(Ng \cap hN)R = Ng \cap hN$, so $Ng \cap hN$ is a union of left cosets of $R$ in $G$.  The left cosets of $R$ in $G$ that are subsets of $Ng$ are exactly those of the form $gtR$ for $t \in g\inv Ng$; indeed,
\[
xR\subseteq Ng\Leftrightarrow g^{-1}xR\subseteq g^{-1}Ng\Leftrightarrow g^{-1}x\in g^{-1}Ng.
\]
Since $R$ has finite index in $g\inv Ng$, we deduce that only finitely many left cosets of the form $gtR$ exist. It follows that the set $\{Ng \cap hN \mid h \in G \text{ and } Ng\cap hN\neq \emptyset\}$ is finite. 

Let $h_1,\dots,h_n \in G$ satisfy
\[ 
\{Ng \cap hN \mid h \in G \text{ and } Ng\cap hN\neq \emptyset \} = \{Ng \cap h_1N, \dots, Ng \cap h_nN\}.
\]
Setting $M := N \cap \bigcap^n_{i=1} h_iNh\inv_i$, we see that $M(Ng \cap hN) = Ng \cap hN$ for all $h \in G$ with $Ng\cap hN\neq\emptyset$. Therefore, $Ng \cap hN$ is a union of right cosets of $M$. That this union is finite follows as in the previous paragraph. \end{proof}

\begin{lem}\label{lem:left_is_right}
Let $G$ be a topological group, $\mc{S}$ be a $G$-stable local filter, and $f$ be a set of subsets of $G$.  Then $f$ is a right $\mc{S}$-Cauchy filter in $G$ if and only if $f$ is a left $\mc{S}$-Cauchy filter in $G$.\end{lem}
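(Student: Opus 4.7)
The plan is first to exploit the symmetry $A\mapsto A^{-1}$. This is a bijection on subsets of $G$, and since every $V\in\mc{S}$ satisfies $V^{-1}=V$, it sends right cosets $Vg$ to left cosets $g^{-1}V$. Hence $f$ is a filter if and only if $f^{-1}:=\{A^{-1}\mid A\in f\}$ is, and one checks directly that $f$ is right $\mc{S}$-Cauchy if and only if $f^{-1}$ is left $\mc{S}$-Cauchy. It therefore suffices to prove a single implication, which I would take to be: every right $\mc{S}$-Cauchy filter is also a left $\mc{S}$-Cauchy filter. Applying that implication to $f^{-1}$ then yields the other direction for $f$.

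Granting this reduction, I would first dispose of condition (b) of left-Cauchy. Given $A\in f$, right-Cauchy (b) supplies a right coset $Wx\subseteq A$ with $W\in\mc{S}$. Setting $V:=W\cap x^{-1}Wx$, which lies in $\mc{S}$ by closure under finite intersections and conjugation, a short computation gives $xV=xW\cap Wx\subseteq Wx\subseteq A$, so $xV$ is the desired left coset of an element of $\mc{S}$.

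For condition (a), uniqueness is immediate, since any two left cosets lying in $f$ would have nonempty intersection and hence coincide. For existence, given $V\in\mc{S}$ I would start from the right coset $Vy\in f$ supplied by right-Cauchy (a) and invoke Lemma~\ref{lem:leftright_cosets} on the Hecke pair $(G,V)$ with $N:=V$ and $g:=y$. This yields $h_1,\dots,h_n\in G$ and a subgroup $M:=V\cap\bigcap_{i=1}^n h_iVh_i^{-1}\in\mc{S}$ such that every nonempty intersection $Vy\cap hV$ is a finite union of right $M$-cosets. Let $Mz\in f$ be the unique right $M$-coset in $f$ provided by right-Cauchy (a). Since $M\le V$, we have $Mz\subseteq Vz$, and since $Mz,Vy\in f$ their intersection is nonempty, forcing $Vz=Vy$ and hence $Mz\subseteq Vy$. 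By the lemma, $Mz$ sits inside exactly one piece $Vy\cap hV$, whence $Mz\subseteq hV$ for that particular $h$. Closure of $f$ under supersets then gives $hV\in f$.

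I expect the main obstacle to be precisely the existence part of condition (a): a priori a right $\mc{S}$-Cauchy filter has no reason to single out any left coset of a prescribed $V\in\mc{S}$, because the right and left coset partitions of $G$ are not directly comparable. Lemma~\ref{lem:leftright_cosets} is exactly the bridge, letting a sufficiently small right $M$-coset distinguished by $f$ be captured inside a single left $V$-coset, so that upward closure of the filter promotes this to $hV\in f$.
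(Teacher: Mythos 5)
Your proof is correct and follows essentially the same route as the paper: reduce to one implication by symmetry, then use Lemma~\ref{lem:leftright_cosets} to produce a subgroup $M\in\mc{S}$ whose unique right coset in $f$ is trapped inside a single piece $Vy\cap hV$, so that upward closure yields $hV\in f$. The only differences are cosmetic (the paper verifies condition (b) via the exact identity $Vg=g(g^{-1}Vg)$ rather than an intersection, and leaves the inverse-map symmetry and the uniqueness of the left coset implicit).
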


\begin{proof}
By symmetry, it suffices to assume $f$ is a right Cauchy filter and prove that $f$ is a left Cauchy filter. 

Fixing $V \in \mc{S}$, the filter $f$ contains some right coset $Vg$ of $V$. Applying Lemma~\ref{lem:leftright_cosets} to $V$ and $g$, we produce a finite intersection $W$ of conjugates of $V$ such that $W \le V$ and such that for each $h \in G$, the set $Vg \cap hV$ is a union of right cosets of $W$. Since $\mc{S}$ is closed under conjugation and finite intersection, we additionally have $W \in \mc{S}$.

Since $f$ is right Cauchy, there is a unique right coset $Wk$ of $W$ contained in $f$, and since $\emptyset \not\in f$, it must be the case that $Wk \subseteq Vg$. Observing that $Vg=\bigcup_{h\in G}Vg\cap hV$, there is some $h\in G$ such that $Wk$ intersects $Vg \cap hV$. Lemma~\ref{lem:leftright_cosets} then ensures that indeed $Wk\subseteq Vg\cap hV$, hence $hV \in f$.  We conclude that $f$ contains a left coset of $V$ for every $V \in \mc{S}$.  Since $f$ is a filter and $\emptyset \not\in f$, in fact $f$ contains exactly one left coset of $V$ proving (a) of the definition of a left $\mc{S}$-Cauchy filter.

Given any element $A \in f$, then $A$ contains $Vg$ for some $V \in \mc{S}$ and $g \in G$; in particular, $A$ contains the left coset $g(g\inv Vg)$ of $g\inv Vg \in \mc{S}$, proving (b). We thus deduce that $f$ is also a left $\mc{S}$-Cauchy filter.
\end{proof}

\begin{cor}\label{cor:inverse_ok}
For $G$ a topological group and $\mc{S}$ a $G$-stable local filter, the set of right $\mc{S}$-Cauchy filters in $G$ is preserved by the map on subsets induced by taking the inverse.
\end{cor}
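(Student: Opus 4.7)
The plan is to show that the inverse map $\iota: \mc{P}(G) \to \mc{P}(G)$, $A \mapsto A\inv$, sends right $\mc{S}$-Cauchy filters to right $\mc{S}$-Cauchy filters. The key observation is already flagged by the authors in the paragraph preceding the corollary: $\iota$ exchanges left and right Cauchy filters, and Lemma~\ref{lem:left_is_right} identifies these two classes.

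First I would verify the filter bookkeeping. Since $\iota$ is a bijection on $\mc{P}(G)$ satisfying $(A \cap B)\inv = A\inv \cap B\inv$, $(A \cup B)\inv = A\inv \cup B\inv$, $\emptyset\inv = \emptyset$, and $A \subseteq B \iff A\inv \subseteq B\inv$, the image $f\inv := \{A\inv \mid A \in f\}$ of any filter $f$ is again a filter of subsets of $G$.

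Next I would check that $\iota$ turns the right Cauchy condition into the left Cauchy condition. Because each $V \in \mc{S}$ is a subgroup, we have $V\inv = V$, and so $(Vg)\inv = g\inv V$. Consequently, $f$ contains exactly one right coset $Vg$ of $V$ if and only if $f\inv$ contains exactly one left coset $g\inv V$ of $V$; similarly, every element of $f$ contains a right coset of some $V \in \mc{S}$ if and only if every element of $f\inv$ contains a left coset of that $V$. Thus $f$ is a right $\mc{S}$-Cauchy filter if and only if $f\inv$ is a left $\mc{S}$-Cauchy filter.

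Finally, invoking Lemma~\ref{lem:left_is_right}, the classes of right $\mc{S}$-Cauchy and left $\mc{S}$-Cauchy filters coincide. Hence $f\inv$ is a right $\mc{S}$-Cauchy filter, which is what was to be shown. There is no real obstacle here: the proof is essentially the two-line observation that taking inverses swaps left and right cosets, combined with the previous lemma which has already absorbed the genuine content.
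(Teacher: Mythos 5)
Your proof is correct and follows exactly the paper's route: the paper's own justification is the short paragraph preceding the corollary, which observes that taking inverses swaps left and right Cauchy filters and then cites Lemma~\ref{lem:left_is_right} to identify the two classes. You have merely written out the routine filter and coset bookkeeping that the paper leaves implicit.
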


\begin{proof}
The map $x \mapsto x\inv$ induces a correspondence between the set of left Cauchy filters and the set of right Cauchy filters.  By Lemma~\ref{lem:left_is_right}, these two sets coincide, so the set of right Cauchy filters is invariant under taking inverses.
\end{proof}

We may now apply Theorem~\ref{thm:completions} to produce a completion of $G$ with respect to $\Phi_r(\mc{S})$, denoted $\hat{G}_{\mc{S}}$. Specifically,
\begin{itemize}
\item The elements of $\hat{G}_{\mc{S}}$ are the (right) $\mc{S}$-Cauchy filters in $G$.

\item The set $\hat{G}_{\mc{S}}$ is equipped with a uniformity with entourages of the form
\[
E_U := \{ (f,f') \mid \exists g \in G: Ug \in f \cap f'\} \quad (U \in \mc{S})
\]
and topology generated by this uniformity.

\item The map given by
\[
\beta_{(G,\mc{S})}: G \rightarrow \hat{G}_{\mc{S}}; \quad g \mapsto f_g
\]
is continuous, since the topology induced by $\Phi_r(\mc{S})$ is coarser than the topology on $G$.  The image is dense, and the kernel is $\bigcap\mc{S}$.

\item There is a unique continuous group multiplication on $\hat{G}_{\mc{S}}$ such that $\beta_{(G,\mc{S})}$ is a homomorphism.  In fact, we can define multiplication on $\hat{G}_{\mc{S}}$ as follows: given $f,f' \in \hat{G}_{\mc{S}}$, then $ff'$ is the minimal Cauchy filter of subsets of $G$ generated by sets $AB \subset G$ where $A \in f$ and $B \in f'$.
\end{itemize}
\begin{defn} For $G$ a topological group and $\mc{S}$ a $G$-stable local filter, we call $\hat{G}_{\mc{S}}$ the \defbold{completion} of $G$ with respect to $\mc{S}$.
\end{defn}

We now establish a correspondence between \tdlc completions and completions with respect to a $G$-stable local filter.

\begin{thm}\label{thm:completion:def} 
If $G$ is a group and $\mc{S}$ is a $G$-stable local filter, then the following hold:
\begin{enumerate}[(1)]
\item The topological group $\hat{G}_{\mc{S}}$ is a \tdlc completion of $G$.

\item There is a one-to-one correspondence between $\U(\hat{G}_{\mc{S}})$ and $\mc{S}$ given as follows: For a compact open subgroup $R$ of $\hat{G}_{\mc{S}}$, the subgroup $\beta\inv_{(G,\mc{S})}(R)$ is the corresponding element of $\mc{S}$, and for $V \in \mc{S}$, the subgroup $\ol{\beta_{(G,\mc{S})}(V)}$ is the corresponding compact open subgroup of $\hat{G}_{\mc{S}}$.
\item For $V \in \mc{S}$, the subgroup $\ol{\beta_{(G,\mc{S})}(V)}$ is naturally isomorphic as a topological group to the profinite completion of $V$ with respect to the quotients $V/N$ such that $N \in \mc{S}$ and $N \unlhd V$. 
\end{enumerate}
\end{thm}

\begin{proof}
For $V \in \mc{S}$, set $B_V := \{f \in \hat{G}_{\mc{S}} \mid V \in f\}$.  In view of the definition of the topology of $\hat{G}_{\mc{S}}$, the collection $\{B_{V} \mid V \in \mc{S}\}$ is a base of identity neighborhoods in $\hat{G}_{\mc{S}}$.  Moreover, each $B_V$ is closed under multiplication and inverse, so $B_V$ is a subgroup of $\hat{G}_{\mc{S}}$.  Therefore, $\hat{G}_{\mc{S}}$ has a base of identity neighborhoods consisting of open subgroups.

Fix $U \in \mc{S}$ and write $\beta:= \beta_{(G,\mc{S})}$.  The image $\beta(U)$ is a dense subgroup of $B_U$, so in fact $B_U = \ol{\beta(U)}$.  Define
\[
\mc{N}_U:=\left\{\bigcap_{u\in U}uVu^{-1}\mid U\geq V\in \mc{S} \right\} .
\]
The set $\mc{N}_U$ is precisely the set of elements of $\mc{S}$ that are normal in $U$, and these subgroups necessarily have finite index in $U$. Form $\hat{U}_\mc{S}$, the profinite completion of $U$ with respect to the finite quotients $\{U/N\mid N\in \mc{N}_U\}$. Representing $\hat{U}_\mc{S}$ as a closed subgroup of $ \prod_{N \in \mc{N}_U}U/N$ in the usual way, we define $\theta:B_U\rightarrow \hat{U}_\mc{S}$ by setting $\theta(f): = (Ng)_{N \in \mc{N}_U}$ where $Ng$ is the unique coset of $N$ in $f$. One verifies that $\theta$ is an isomorphism of topological groups, proving $(3)$.

The set $\{B_V \mid V \in \mc{S}\}$ is thus a basis at $1$ of compact open subgroups, so $\hat{G}_{\mc{S}}$ is a \tdlc group. Since $\beta$ has a dense image, $\hat{G}_{\mc{S}}$ is a \tdlc completion of $G$, verifying $(1)$.

Finally, we have a map $\eta: \mc{S} \rightarrow \U(\hat{G}_{\mc{S}})$ given by $\eta: V \mapsto B_V$.  We observe from the definition of $B_V$ that in fact $V = \beta\inv(B_V)$, so $\eta$ is injective, with inverse on $\U(\hat{G}_{\mc{S}}) \cap \eta(\mc{S})$ given by $R \mapsto \beta\inv(R)$.  Let $R$ be a compact open subgroup of $\hat{G}_{\mc{S}}$.  Since $\{B_{V} \mid V \in \mc{S}\}$ is a base of identity neighborhoods in $\hat{G}_{\mc{S}}$, there is some $V \in \mc{S}$ such that $B_V \le R$. The subgroup $B_V$ has finite index in $R$, since $R$ is compact and $B_V$ is open, so $\beta\inv(B_V) = V$ has finite index in $\beta\inv(R)$.  By the definition of an $\mc{S}$-stable local filter, we therefore have $\beta\inv(R) \in \mc{S}$.  We conclude that $\eta$ is a bijection of the form required for $(2)$.
\end{proof}

For $V\in \mc{S}$, we will often abuse notation slightly and say that the profinite completion $\hat{V}_{\mc{S}}$ equals the compact open subgroup $\ol{\beta_{(G,\mc{S})}(V)}$ of $\hat{G}_{\mc{S}}$.  We will also write $\beta$ in place of $\beta_{(G,\mc{S})}$ when $G$ and $\mc{S}$ are clear from the context.

\section{Classification and factorization of completions}

Theorem~\ref{thm:completion:def} gives a method for producing \tdlc completions of a group $G$. In fact, just as in the profinite case, we see that \textit{all} \tdlc completions of $G$ arise in this way.  We first prove a criterion for whether a homomorphism from $G$ to a \tdlc group factors through $\hat{G}_{\mc{S}}$.

\begin{prop}\label{prop:filter_image}
Let $\phi: G \rightarrow H$ be a continuous homomorphism such that $H$ is a \tdlc group and let $\mc{S}$ be a $G$-stable local filter.  Then the following are equivalent:
\begin{enumerate}[(1)]
\item There is a continuous homomorphism $\psi: \hat{G}_{\mc{S}} \rightarrow H$ such that $\phi = \psi \circ \beta_{(G,\mc{S})}$.
\item Every open subgroup of $H$ contains $\phi(V)$ for some $V \in \mc{S}$.
\end{enumerate}
Moreover, if $(1)$ holds, then $\psi$ is unique.
\end{prop}

\begin{proof}
Suppose that $(1)$ holds and let $U$ be an open subgroup of $H$.  The preimage $\psi\inv(U)$ is an open subgroup of $\hat{G}_{\mc{S}}$, so $R \le \psi\inv(U)$ for some compact open subgroup $R$ of $\hat{G}_{\mc{S}}$.  Theorem~\ref{thm:completion:def} ensures that $V:= \beta\inv(R) \in \mc{S}$, and $V $ is such that $\phi(V) = \psi(\beta(V)) \le U$.  We conclude $(2)$.  Additionally, since $\beta(G)$ is dense in $\hat{G}_{\mc{S}}$, the equation $\phi = \psi \circ \phi$ determines the restriction of $\psi$ to $\beta(G)$ and hence determines $\psi$ uniquely as a continuous map.

Conversely, suppose that every open subgroup of $H$ contains $\phi(V)$ for some $V \in \mc{S}$.  For $f \in \hat{G}_{\mc{S}}$, define
\[ 
\hat{f}: = \bigcap \{\overline{\phi(Vg)} \mid g \in G, V \in \mc{S},\text{ and } Vg \in f\}.
\]
Since $H$ is a \tdlc group, the open subgroups of $H$ form a basis of identity neighborhoods.  By $(1)$, it follows that the set $\{\overline{\phi(V)} \mid V \in \mc{S}\}$ contains arbitrarily small subgroups of $H$, so its intersection $\hat{1}$ is the trivial group.

For general $f\in \hat{G}_{\mc{S}}$, $|\hat{f}|\le 1$ since $\hat{f}$ is an intersection of cosets of arbitrarily small subgroups of $H$.  Fix $R \in \U(H)$ and $Q\in \mc{S}$ such that $\ol{\phi(Q)} \le R$, so in particular $\ol{\phi(Q)}$ is compact. Fix $h\in G$ such that $Qh \in f$. Letting $g \in G$ and $V \in \mc{S}$ be such that $Vg \in f$, there is some $k \in G$ such that $(V \cap Q)k \in f$. The collection $f$ is a proper filter, so we see that $(V \cap Q)k \subseteq Vg \cap Qh$.  In particular, $\overline{\phi(V \cap Q)k}$ is contained in $\overline{\phi(Vg)}$. We can thus write $\hat{f}$ as $\bigcap_{C \in \mc{C}} C$ where
\[
\mc{C} = \{\overline{\phi(Vg)} \mid g \in G, V \in \mc{S}, Vg \subseteq Qh \text{ and } Vg \in f\}.
\]
For any $Q_1g_1,\dots,Q_ng_n \in f$ such that $Q_ig_i\subseteq Qh$, the intersection $\bigcap^n_{i=0} Q_ig_i$ is non-empty as it is an element of $f$. Thus $\mc{C}$ is a family of closed subsets of the compact set $\ol{\phi(Qg)}$ with the finite intersection property. We conclude that $\hat{f}$ is nonempty, so $|\hat{f}| =1$.  We now define a function $\psi: \hat{G}_{\mc{S}}\rightarrow H$ by setting $\psi(f)$ to be the unique element of $\hat{f}$. One verifies that $\psi$ is a homomorphism satisfying $\phi = \psi \circ \beta$.

To see that $\psi$ is continuous, fix $(V_{\alpha})_{\alpha\in I}$ a basis at $1$ of compact open subgroups for $H$ and for each $\alpha \in I$, choose $W_{\alpha} \in \mc{S}$ such that $\phi(W_{\alpha}) \le V_{\alpha}$.  As in the previous paragraph, we observe that $\bigcap _{\alpha\in I}\ol{\phi(W_{\alpha})}=\{1\}$. Consider a convergent net $f_{\delta}\rightarrow f$ in $\hat{G}_{\mc{S}}$. For each subgroup $W_{\alpha}$, there is $\eta_{\alpha}\in I$ such that $f_{\gamma}f^{-1}_{\gamma'}$ contains $W_{\alpha}$ for all $\gamma,\gamma'\geq \eta_{\alpha}$. We conclude that $\psi(f_{\gamma}f^{-1}_{\gamma'})\in \ol{\phi(W_{\alpha})} \le V_{\alpha}$ for all such $\gamma,\gamma'$.  In other words, $\psi(f_{\delta})$ is a Cauchy net in $H$ with respect to the right uniformity of $H$.  Since $H$ is locally compact, it is complete with respect to this uniformity, and $\psi(f_{\delta})$ converges. It now follows that $\psi(f) = \lim_{\delta\in I}\psi(f_{\delta})$, so $\psi$ is continuous as claimed, completing the proof that $(2)$ implies $(1)$.
\end{proof}

As a corollary, we note the case of Proposition~\ref{prop:filter_image} when $H$ is itself the completion with respect to a $G$-stable local filter.

\begin{cor}\label{cor:completion:factorization}
Let $G$ be a group with $G$-stable local filters $\mc{S}_1$ and $\mc{S}_2$.  Then the following are equivalent:
\begin{enumerate}[(1)]
\item There is a continuous homomorphism $\psi:\hat{G}_{\mc{S}_1}\rightarrow \hat{G}_{\mc{S}_2}$ such that $\beta_{(G,\mc{S}_2)} = \psi \circ \beta_{(G,\mc{S}_1)}$.
\item For all $V_2 \in \mc{S}_2$, there exists $V_1 \in \mc{S}_1$ such that $V_1 \le V_2$.
\end{enumerate}

\end{cor}

\begin{proof}
Set $\beta_1:=\beta_{(G,\mc{S}_1)}$ and $\beta_2:=\beta_{(G,\mc{S}_2)}$.

Suppose that $(1)$ holds and take $V_2 \in \mc{S}_2$.  There exists $L \in \U(\hat{G}_{\mc{S}_2})$ such that $V_2 = \beta\inv_2(L)$.  Applying Proposition~\ref{prop:filter_image} with the filter $\mc{S}_1$ gives $V_1 \in \mc{S}_1$ such that $\beta_2(V_1) \le L$. We deduce that $V_1 \le V_2$, verifying $(2)$.

Conversely, suppose that $(2)$ holds. For every open subgroup $U$ of $\hat{G}_{\mc{S}_2}$, we have $\beta_2(V_2) \le U$ for some $V_2 \in \mc{S}_2$ by Theorem~\ref{thm:completion:def}(2).  By hypothesis, there exists $V_1 \in \mc{S}_1$ such that $V_1 \le V_2$, so $\beta_2(V_1) \le U$.  The conclusion $(1)$ now follows by Proposition~\ref{prop:filter_image}.
\end{proof}

We will now demonstrate that all \tdlc completions arise as completions with respect to $G$-stable local filters.

\begin{thm}\label{thm:completion:characterization} 
If $G$ is a group and $H$ is a \tdlc completion of $G$ via $\phi: G \rightarrow H$, then the set of all preimages of compact open subgroups of $H$ is a $G$-stable local filter $\mc{S}$, and moreover, there is a unique topological group isomorphism $\psi: \hat{G}_{\mc{S}} \rightarrow H$ such that $\phi = \psi \circ \beta_{(G,\mc{S})}$.
\end{thm}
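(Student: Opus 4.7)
The plan splits naturally into three steps: verify that $\mc{S}:=\{\phi\inv(L)\mid L\in \U(H)\}$ is a $G$-stable local filter; construct $\psi$ using the universal property of uniform completions; and check that $\psi$ is a topological group isomorphism, with uniqueness coming for free from density.

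For the first step I would check axioms (a)--(d) in turn. Non-emptiness uses van Dantzig's theorem applied to $H$. Commensurability (b) and conjugation-stability (d) are direct from the identities $\phi\inv(L_1)\cap \phi\inv(L_2) = \phi\inv(L_1\cap L_2)$ and $g\phi\inv(L)g\inv = \phi\inv(\phi(g)L\phi(g)\inv)$, together with the fact that intersections and conjugates of compact open subgroups are again compact open. The delicate clause is the upward closure part of (c): given $V=\phi\inv(L)\le W$ with $|W:V|$ finite, I would use that $\phi(V)$ is dense in $L$ (by density of $\phi(G)$ and openness of $L$) to show that $L':= \ol{\phi(W)}$ is a finite union of cosets of $L$, hence compact open, and then a short index count shows $\phi\inv(L')=W$, so $W\in \mc{S}$.

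The key observation for the second step is that the right $\mc{S}$-uniformity on $G$ is exactly the $\phi$-pullback of the right uniformity on $H$: indeed $R_{\phi\inv(L)} = \{(x,y)\mid \phi(xy\inv)\in L\}$. Consequently $\phi$ is uniformly continuous, and since $H$ is locally compact and hence complete in its right uniformity, the standard extension theorem for uniform completions produces a unique uniformly continuous $\psi:\hat{G}_{\mc{S}}\to H$ with $\psi\circ \beta_{(G,\mc{S})} = \phi$. The multiplication on $\hat{G}_{\mc{S}}$ is the unique continuous extension of the group operation on $\beta_{(G,\mc{S})}(G)$, so $\psi$ is automatically a continuous homomorphism.

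For the third step: to see $\psi$ is injective, suppose $\psi(f)=1_H$; then for every $V=\phi\inv(L)\in \mc{S}$, convergence of $\phi(f)$ to $1_H$ forces some $A\in f$ with $\phi(A)\subseteq L$, hence $A\subseteq V$, so the unique right coset of $V$ in $f$ must be $V$ itself; thus $f$ coincides with the principal filter at $1$. For surjectivity, I would fix $V\in \mc{S}$ with $V=\phi\inv(L)$ and observe that $\psi(\ol{\beta_{(G,\mc{S})}(V)}) = \ol{\phi(V)} = L$ since $\phi(V)$ is dense in the open subgroup $L$; restricted to $\ol{\beta_{(G,\mc{S})}(V)}$, the map $\psi$ becomes a continuous bijection of compact Hausdorff groups, hence a homeomorphism. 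This makes $\psi$ open at the identity, and by homogeneity everywhere; its image is therefore open and dense in $H$, so all of $H$. Uniqueness is immediate by continuity plus density of $\beta_{(G,\mc{S})}(G)$. I expect the main obstacle to be the upward-closure clause in Step 1, where care is needed to identify $W$ precisely with the preimage of $\ol{\phi(W)}$.
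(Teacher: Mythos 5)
Your proposal is correct, and while its overall architecture matches the paper's (same filter $\mc{S}$, then build $\psi$ and upgrade it to a topological isomorphism), the middle step is packaged differently. The paper constructs $\psi(f)$ by hand as the unique point of $\bigcap\{\overline{\phi(Vg)} \mid Vg \in f\}$, using the finite intersection property of compact cosets and the fact that $\bigcap\{\overline{\phi(V)} \mid V \in \mc{S}\}$ is trivial; this keeps the argument self-contained. You instead observe that $\mc{R}_{\mc{S}}$ is precisely the $(\phi\times\phi)$-pullback of the right uniformity of $H$ and invoke the extension theorem for uniformly continuous maps into a complete Hausdorff uniform space, which requires the (true, but worth citing) fact that a locally compact group is complete in its right uniformity; the explicit intersection in the paper is essentially the proof of that extension theorem unwound in this special case. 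Two further points where you add value: you supply the upward-closure verification for $\mc{S}$ (showing $\overline{\phi(W)}$ is a finite union of cosets of $L$ with preimage exactly $W$), which the paper dismisses with ``one verifies,'' and your injectivity argument (a coset of $V$ in $f$ meeting $A \subseteq V$ forces $V \in f$, hence $f$ is the principal filter at $1$ by minimality) is likewise left implicit in the paper. Finally, you conclude surjectivity from the fact that an open dense subgroup is the whole group, whereas the paper invokes the Baire property; both are valid, and yours is marginally more elementary.
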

\begin{proof}
Setting $\mc{S} := \{ \phi\inv(M) \mid M \in \U(H)\}$, one verifies that $\mc{S}$ is a $G$-stable local filter.  Define $\psi: \hat{G}_{\mc{S}} \rightarrow H$ as in Proposition~\ref{prop:filter_image}, so $\psi$ is the unique continuous homomorphism such that $\phi = \psi \circ \beta$.  It is easily verified that $\psi$ is injective.

The group $\hat{G}_{\mc{S}}$ has a base of identity neighborhoods of the form $\{\hat{V}_{\mc{S}} \mid V \in \mc{S}\}$. Given $V \in \mc{S}$, say that $\phi\inv (P)=V$ for $P\in \U(H)$. The image $\phi(V)$ is dense in $P$, and the image $\beta(V)$ is dense in $\hat{V}_{\mc{S}}$. Since $\phi = \psi \circ \beta$, we infer that $\psi(\hat{V}_{\mc{S}})$ is also a dense subgroup of $P$.  The map $\psi$ is continuous and $\hat{V}_{\mc{S}}$ is compact, so in fact $\psi(\hat{V}_{\mc{S}}) = P$.  Hence, $\psi$ is an open map.  Since the image of $\psi$ is dense, it follows that $\psi$ is surjective and thereby bijective.

The map $\psi$ is an open continuous bijective homomorphism.  We conclude that $\psi$ is an isomorphism of topological groups, completing the proof.
\end{proof}
 
\begin{rmk}Theorem~\ref{thm:completion:characterization} shows that, up to isomorphism, all \tdlc completions of a group $G$ have the form $\hat{G}_{\mc{S}}$ for $\mc{S}$ some $G$-stable local filter.  This applies to the particular case where the \tdlc completion is a profinite group. For instance, the profinite completion of a discrete group $G$ is $\hat{G}_{\mc{S}}$ where $\mc{S}$ is the set of all subgroups of finite index in $G$. 
\end{rmk}

We conclude this section by characterizing properties of the kernel of the homomorphism $\psi$ obtained in Corollary~\ref{cor:completion:factorization} in terms of the $G$-stable local filters.  In light of Theorem~\ref{thm:completion:characterization}, these characterizations in fact apply to any continuous homomorphism $\psi: H_1 \rightarrow H_2$ where $H_1$ and $H_2$ are completions of $G$, via taking $\mc{S}_i$ to be the set of preimages of the compact open subgroups of $H_i$.

\begin{prop}\label{prop:completions:comparison}
Let $G$ be a group with $G$-stable local filters $\mc{S}_1$ and $\mc{S}_2$ such that for all $V_2\in \mc{S}_2$ there is $V_1\in \mc{S}_1$ with $V_1\leq V_2$.  Let $\psi:\hat{G}_{\mc{S}_1}\rightarrow \hat{G}_{\mc{S}_2}$ be the unique continuous homomorphism such that $\beta_{(G,\mc{S}_2)} = \psi \circ \beta_{(G,\mc{S}_1)}$, as given by Corollary~\ref{cor:completion:factorization}.  Then the following holds:
\begin{enumerate}[(1)]
\item The kernel of $\psi$ is compact if and only if $\mc{S}_2 \subseteq \mc{S}_1$.  If $\mc{S}_2 \subseteq \mc{S}_1$, then $\psi$ is also a quotient map.
\item The kernel of $\psi$ is discrete if and only if there exists $V_1 \in \mc{S}_1$ such that for all $ W_1 \in \mc{S}_1$ there is $W_2 \in \mc{S}_2$ with $V_1\cap W_2\le W_1$. 
\item The kernel of $\psi$ is trivial if and only if for all $ V_1 \in \mc{S}_1$,
$$V_1 = \bigcap \{V_2 \in \mc{S}_2 \mid V_1 \le V_2\}.$$
\end{enumerate}
\end{prop}
\begin{proof}
Set $\beta_1:=\beta_{(G,\mc{S}_1)}$ and $\beta_2:=\beta_{(G,\mc{S}_2)}$.

\textbf{Proof of $(1)$.}
Suppose that $\ker\psi$ is compact.  Taking $V \in \mc{S}_2$, the group $\psi\inv(\hat{V}_{\mc{S}_2})$ is a compact open subgroup of $\hat{G}_{\mc{S}}$ such that the preimage under $\beta_1$ is $V$.  Given the characterization of compact open subgroups of $\hat{G}_{\mc{S}_1}$ established in Theorem~\ref{thm:completion:def}(2), it follows that $V \in \mc{S}_1$.  Hence, $\mc{S}_2 \subseteq \mc{S}_1$.

Conversely, suppose that $\mc{S}_2 \subseteq \mc{S}_1$ and let $V \in \mc{S}_2$.  Given the construction of $\psi$ in the proof of Proposition~\ref{prop:filter_image}, we see that $\ker\psi \le \hat{V}_{\mc{S}_1}$; in particular, $\ker\psi$ is compact.  We see from Theorem~\ref{thm:completion:def} that $\psi(\hat{V}_{\mc{S}_1})= \ol{\beta_2(V)} = \hat{V}_{\mc{S}_2}$, so the image of $\psi$ is open.  Since the image of $\psi$ is also dense, it follows that $\psi$ is surjective.  We conclude that $\psi$ is a quotient map with compact kernel, as required.

\

\textbf{Proof of $(2)$.}
Suppose that $\ker\psi$ is discrete, so there is $V_1 \in \mc{S}_1$ such that $\ker\psi \cap (\hat{V_1})_{\mc{S}_1} = \triv$.  The map $\psi$ restricts to a topological group isomorphism from $(\hat{V_1})_{\mc{S}_1}$ to its image.  Take $W_1 \in \mc{S}_1$ and set $Y := V_1 \cap W_1$.  The subgroup $(\hat{Y})_{\mc{S}_1}$ is open in $(\hat{V_1})_{\mc{S}_1}$, so $\psi((\hat{Y})_{\mc{S}_1})$ is open in $\psi((\hat{V_1})_{\mc{S}_1})$.  There thus exists $W_2 \in \mc{S}_2$ such that $\psi((\hat{V_1})_{\mc{S}_1}) \cap (\hat{W_2})_{\mc{S}_2}  \le \psi((\hat{Y})_{\mc{S}_1})$.  Since $\psi$ is injective on $(\hat{V_1})_{\mc{S}_1}$, it follows that 
\[
(\hat{V_1})_{\mc{S}_1} \cap \psi\inv((\hat{W_2})_{\mc{S}_2}) \le (\hat{Y})_{\mc{S}_1}.
\]
We deduce that $V_1 \cap W_2 \le Y \le W_1$ since $\psi\circ \beta_1=\beta_2$.

\

Conversely, suppose that $V_1 \in \mc{S}_1$ is such that for all $ W_1 \in \mc{S}_1$ there is $W_2 \in \mc{S}_2$ with $V_1\cap W_2\le W_1$.  We claim that $(\hat{V_1})_{\mc{S}_1}$ intersects $\ker\psi$ trivially, which will demonstrate that $\ker\psi$ is discrete.  Suppose that $f \in (\hat{V_1})_{\mc{S}_1} \cap \ker\psi$ and suppose for contradiction that $f$ is non-trivial. There is then some $W_1 \in \mc{S}_1$ such that $f$ contains a nontrivial coset $W_1g$ of $W_1$; we may assume that $W_1 \le V_1$.  Since $f \in (\hat{V_1})_{\mc{S}_1}$, we additionally have $g \in V_1$.  

By the hypotheses, there is $W_2 \in \mc{S}_2$ such that $V_1 \cap W_2 \le W_1$, and there is $Y \in \mc{S}_1$ such that $Y\le V_1 \cap W_2$. Letting $g'\in G$ be such that $Yg'\in f$, it must be the case that $Yg' \subseteq W_2$ since $\psi(f)$ is trivial. On the other hand, $Yg'\subseteq W_1g$, so $g' \in V_1$. We now see that $g' \in V_1 \cap W_2 \le W_1$ and that $W_1g=W_1g' = W_1$. Thus $W_1g$ is the trivial coset of $W_1$, a contradiction.  We conclude that $(\hat{V_1})_{\mc{S}_1} \cap \ker\psi = \triv$ as claimed.

\

\textbf{Proof of $(3)$.}
Suppose that $\psi$ is injective.  Let $V_1 \in \mc{S}_1$ and set $\mc{R}: = \{V_2 \in \mc{S}_2 \mid V_1 \le V_2\}$.  The subgroup $(\hat{V_1})_{\mc{S}_1}$ is a compact open subgroup of $\hat{G}_{\mc{S}_1}$, so $K: = \psi((\hat{V_1})_{\mc{S}_1})$ is a compact, hence profinite, subgroup of $\hat{G}_{\mc{S}_2}$.  It follows that $K$ is the intersection of all compact open subgroups of $\hat{G}_{\mc{S}_2}$ that contain $K$. 

All compact open subgroups of $\hat{G}_{\mc{S}_2}$ are of the form $\hat{W}_{\mc{S}_2}$ for some $W\in \mc{S}_2$. For $\hat{W}_{\mc{S}_2}\ge K$ compact and open, we thus have $\beta^{-1}_2(K)\leq W$, and since $\beta_1=\psi\circ \beta_2$, we deduce further that $V_1=\beta^{-1}_1((\hat{V}_1)_{\mc{S}_1})\leq W$. Hence, $K = \bigcap \{ \hat{W}_{\mc{S}_2} \mid W\in \mc{R}\}$.

For $g \in \bigcap \mc{R}$, it is the case that $\beta_2(g) \in \beta_2(W)$ for all $W \in \mc{R}$, so $\beta_2(g) \in K$.  In other words,
\[
\psi\circ\beta_1(g) \in \psi((\hat{V_1})_{\mc{S}_1}).
\]
As $\psi$ is injective, $\beta_1(g) \in (\hat{V_1})_{\mc{S}_1}$, and thus, $g \in \beta\inv_1((\hat{V_1})_{\mc{S}_1}) = V_1$, showing that $\bigcap \mc{R} \subseteq V_1$.  On the other hand it is clear from the definition of $\mc{R}$ that $V_1 \subseteq \bigcap\mc{R}$, so equality holds as required.

\

Conversely, suppose that for all $V_1 \in \mc{S}_1$, it is the case that $V_1 = \bigcap \{V_2 \in \mc{S}_2 \mid V_1 \le V_2\}$.  Fix $f \in \ker\psi$.  For each $V_1 \in \mc{S}_1$, we may write $f = \beta_1(g)u$ for $g \in G$ and $u \in (\hat{V}_1)_{\mc{S}_1}$, since $\beta_1(G)$ is dense in $\hat{G}_{\mc{S}_1}$, and it follows that $\beta_2(g) \in \psi((\hat{V}_1)_{\mc{S}_1})$.  We now infer that $g \in V_2$ for all $V_2 \in \mc{S}_2$ such that $V_2 \ge V_1$; by our hypothesis, it follows that $g \in V_1$. Recalling that $f=\beta_1(g)u$, it follows that $f\in (\hat{V}_1)_{\mc{S}_1}$. The subgroups $(\hat{V}_1)_{\mc{S}_1}$ form a basis at $1$, so indeed $f=1$. We conclude that $\psi$ is injective as required.
\end{proof}

\section{Canonical completions of Hecke pairs}

We now consider the \tdlc completions $H$ of $G$ such that a specified subgroup $U$ of $G$ is the preimage of a compact open subgroup of the completion.  The pair $(G,U)$ is then a Hecke pair.

Let us make several definitions to organize our discussion of \tdlc completions.  Recall that all preimages of compact open subgroups of a given range group $H$ are commensurate, so the following definition does not depend on the choice of $U$. 

\begin{defn}
Given a \tdlc completion map $\phi: G \rightarrow H$, the \defbold{size} of $\phi$ is defined to be $[U]$, where $U$ is the preimage of a compact open subgroup of $H$.  We also say that $[U]$ is the size of $H$ when the choice of completion map is not important, or can be inferred from the context.

A \defbold{completion map} for a Hecke pair $(G,U)$ is a continuous homomorphism $\phi: G \rightarrow H$ with dense image such that $H$ is a \tdlc group and $U$ is the preimage of a compact open subgroup of $H$. We say that $H$ is a \defbold{completion} of $(G,U)$.  When $H$ is also second countable, we call $H$ a \defbold{second countable completion}.
\end{defn}

Given a Hecke pair $(G,U)$, there are two canonical $G$-stable local filters containing $U$, defined as follows: The \defbold{Belyaev filter} is $[U]$.  The \defbold{Schlichting filter} $\mc{S}_{G/U}$ for $(G,U)$ is the filter of $[U]$ generated by the conjugacy class of $U$ -- that is,
\[
\mc{S}_{G/U}:=\left\{V\in [U]\mid \exists g_1,\dots,g_n\in G\text{ such that }\bigcap_{i=1}^ng_iUg_i^{-1}\leq V\right\}.
\]

\begin{defn}
The \defbold{Belyaev completion} for $(G,U)$, denoted by $\hat{G}_U$, is defined to be $\hat{G}_{[U]}$. The canonical inclusion map $\beta_{(G,[U])}$ is denoted by $\beta_{U}$. The \defbold{Schlichting completion} for $(G,U)$, denoted by $G\ssl U$, is defined to be $\hat{G}_{\mc{S}_{G/U}}$. The canonical inclusion map $\beta_{(G,\mc{S}_{G/U})}$ is denoted by $\beta_{G/U}$.
\end{defn}

\begin{rmk}
We stress that the commensurability class $[U]$ does not determine the Schlichting filter $\mc{S}_{G/U}$; indeed, the only situation when there is only one Schlichting filter of a given size is when the Schlichting filter is equal to the Belyaev filter of that size. 
\end{rmk}

Given any $G$-stable local filter $\mc{S}$ of size $[U]$ that contains $U$, we have $\mc{S}_{G/U}\subseteq \mc{S}\subseteq [U]$. Amongst $G$-stable local filters that contain $U$, the Schlichting filter is minimal whilst $[U]$ is maximal. The Belyaev and Schlichting completions are thus maximal and minimal completions of a Hecke pair in the following strong sense:

\begin{thm}\label{thm:completion:sandwich}
Suppose that $G$ is a group and that $\phi: G \rightarrow H$ is a completion map. Letting $U\leq G$ be the preimage of some compact open subgroup of $H$, then $(G,U)$ is a Hecke pair, $H$ is a completion of $(G,U)$, and there are unique continuous quotient maps $\psi_1: \hat{G}_U \rightarrow H$ and $\psi_2: H \rightarrow G\ssl U$ with compact kernels such that the following diagram commutes:
\[
\xymatrixcolsep{3pc}\xymatrix{
& G\ar_{\beta_U}[ld] \ar^{\phi}[d]  \ar^{\beta_{G/U}}[rd]& \\
\hat{G}_U  \ar_{\psi_1}[r] & H \ar_{\psi_2}[r]& G\ssl U. }
\]
\end{thm}

\begin{proof}
Fix $L$ a compact open subgroup of $H$ and set $U: = \phi\inv(L)$.  The subgroup $U$ is an open commensurated subgroup of $G$, so $(G,U)$ is a Hecke pair.  Since $\phi$ has dense image, we conclude that $H$ is a completion of $(G,U)$.  Set $\mc{S}:=\{\phi^{-1}(V)\mid V\in \U(H)\}$. By Theorem~\ref{thm:completion:characterization}, $\mc{S}$ is a $G$-stable local filter, and there is a unique topological group isomorphism $\psi:\hat{G}_{\mc{S}}\rightarrow H$ such that $\phi=\psi\circ\beta_{(G,\mc{S})}$.  Observe that $U \in \mc{S}$ by Theorem~\ref{thm:completion:def}(2).

Since $\mc{S}$ contains $U$, Proposition~\ref{prop:completions:comparison} ensures there are unique continuous quotient maps $\pi_1$ and $\pi_2$ with compact kernels such that the following diagram commutes:
\[
\xymatrixcolsep{3pc}\xymatrix{
& G\ar_{\beta_U}[ld] \ar^{\beta_{(G,\mc{S})}}[d]  \ar^{\beta_{G/U}}[rd]& \\
\hat{G}_U  \ar_{\pi_1}[r] & \hat{G}_{\mc{S}} \ar_{\pi_2}[r]& G\ssl U. }
\]
It follows that $\psi_1:=\psi\circ \pi_1$ and $\psi_2:=\pi_2\circ \psi^{-1}$ make the desired diagram commute and both are continuous quotient maps with compact kernels. Uniqueness follows since $\psi$, $\pi_1$, and $\pi_2 $ are unique.
\end{proof}

Theorem~\ref{thm:completion:sandwich} shows all possible completions of a Hecke pair $(G,U)$ differ only by a compact normal subgroup. The locally compact, non-compact structure of a \tdlc completion thus depends only on the Hecke pair; contrast this with the many different profinite completions a group can admit. We give precise statements illustrating this phenomenon in Section~\ref{sec:invariant}.

We conclude this section by making two further observations. First, the Schlichting completion has a natural description. Suppose $(G,U)$ is a Hecke pair and let $\sigma_{(G,U)}:G\rightarrow \Sym(G/U)$ be the permutation representation given by left multiplication. We consider $\Sym(G/U)$ to be equipped with the topology of pointwise convergence.

\begin{prop}\label{prop:sch_description}
For $(G,U)$ a Hecke pair, there is a unique topological group isomorphism $\psi:G\ssl U\rightarrow \ol{\sigma_{(G,U)}(G)}$ such that $\sigma_{(G,U)}=\psi\circ \beta_{G/U}$. 
\end{prop}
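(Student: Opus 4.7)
The plan is to show directly that $\sigma:=\sigma_{(G,U)}: G\to H$, where $H:=\ol{\sigma(G)}\le \Sym(G/U)$, is a \tdlc completion map of $(G,U)$, and then to invoke Theorem~\ref{thm:completion:characterization}. The one non-routine task will be to check that $H$ is locally compact and that the resulting $G$-stable local filter $\mc{S}:=\{\sigma\inv(M)\mid M\in \U(H)\}$ coincides with the Schlichting filter $\mc{S}_{G/U}$.

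First I would observe that $\sigma$ is continuous (the pointwise stabilizer of $\{g_1U,\dots,g_nU\}$ pulls back to the open subgroup $\bigcap_i g_iUg_i\inv$), has dense image by construction, and that $H$ is a topological group as a closed subgroup of $\Sym(G/U)$. To establish local compactness I set $K:=H\cap \Stab_{\Sym(G/U)}(U)$, which is open in $H$ with $\sigma\inv(K)=U$, and by continuity and density equals $\ol{\sigma(U)}$. The key computation is that the $\sigma(U)$-orbit of $gU$ has size $|U:U\cap gUg\inv|$, which is finite by the Hecke pair hypothesis; hence $\sigma(U)$ (and so $K$) acts with only finite orbits on $G/U$, and $K$ embeds as a closed subgroup of the compact product $\prod_{\mathcal{O}} \Sym(\mathcal{O})$ indexed by the $\sigma(U)$-orbits $\mathcal{O}$. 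Compactness of $K$ follows; as pointwise stabilizers of finite subsets of $G/U$ containing $U$ are contained in $K$ and form a base of identity neighborhoods in $H$, the group $H$ is \tdlc.

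With local compactness in hand, $\sigma$ is a \tdlc completion map of $(G,U)$, and Theorem~\ref{thm:completion:characterization} immediately produces a unique topological isomorphism $\psi:\hat{G}_{\mc{S}}\to H$ with $\sigma=\psi\circ \beta_{(G,\mc{S})}$ for $\mc{S}$ as above. The remaining step is to identify $\mc{S}$ with $\mc{S}_{G/U}$. For one inclusion, for any $g_1,\dots,g_n\in G$ the subgroup $H\cap \Stab(\{U, g_1U,\dots,g_nU\})$ is an open (hence finite-index and so compact) subgroup of $K$ whose preimage under $\sigma$ is $U\cap \bigcap_i g_iUg_i\inv$; the filter axioms of $\mc{S}$ then put every element of $\mc{S}_{G/U}$ into $\mc{S}$. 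For the reverse inclusion, any $M\in \U(H)$ contains a basic neighborhood $H\cap \Stab(\{g_1U,\dots,g_nU\})$ and hence $\sigma\inv(M)\supseteq \bigcap_i g_iUg_i\inv$; moreover $M\cap K$ is an open, and so finite-index, subgroup of the compact group $K$, which forces $\sigma\inv(M)$ to be commensurate with $U=\sigma\inv(K)$ and therefore to lie in $\mc{S}_{G/U}$. Thus $\hat{G}_{\mc{S}}=G\ssl U$ and $\beta_{(G,\mc{S})}=\beta_{G/U}$, yielding $\psi$ with the stated uniqueness.

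I expect the compactness of $K$ to be the main obstacle: this is the one place where the Hecke pair hypothesis must be used in an essential way, and it is easy to overlook that $\Sym(G/U)$ itself is far from locally compact in general. Once $K$ is under control, identifying $\mc{S}$ with $\mc{S}_{G/U}$ is a matter of reading off preimages of the explicit basic neighborhoods in $\Sym(G/U)$.
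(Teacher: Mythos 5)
Your proposal is correct and follows essentially the same route as the paper: establish that $\ol{\sigma_{(G,U)}(U)}=\Stab_{\ol{\sigma_{(G,U)}(G)}}(U)$ is a compact open subgroup because the $U$-orbits on $G/U$ are finite, conclude that $\sigma_{(G,U)}$ is a \tdlc completion map, identify the associated filter with $\mc{S}_{G/U}$ via the basic open stabilizers of finite sets of cosets, and invoke Theorem~\ref{thm:completion:characterization}. Your treatment of the compactness of $K$ and of the two filter inclusions is just a more detailed version of what the paper leaves implicit.
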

\begin{proof}
For $Y\subseteq G$, set $\wh{Y}:=\ol{\sigma_{(G,U)}(Y)}$. The orbits of $\sigma(U)$ are finite on $G/U$, so $\wh{U}$ is a profinite group. On the other hand, $\wh{U}=\Stab_{\wh{G}}(U)$, hence it is open. It now follows that $\wh{G}$ is a \tdlc completion of $G$. 

A basis for the topology on $\wh{G}$ is given by stabilizers of finite sets of cosets. Such stabilizers are exactly of the form $\bigcap_{g\in F}\sigma(g)\wh{U}\sigma(g^{-1})$ with $F\subseteq G$ finite. For every $V\in \U(\wh{G})$, the subgroup $\sigma_{(G,U)}^{-1}(V)$ therefore contains $\bigcap_{g\in F}gUg^{-1}$ for some $F\subseteq G$ finite. The $G$-stable local filter $\mc{S}:=\{\sigma^{-1}_{(G,U)}(V)\mid V\in \U(\wh{G})\}$ is thus exactly the Schlichting filter $\mc{S}_{G/U}$. Theorem~\ref{thm:completion:characterization} now implies the proposition.
\end{proof}

Via Theorem~\ref{thm:completion:sandwich} and Proposition~\ref{prop:sch_description}, we recover a result from the literature.

\begin{cor}[{Shalom--Willis \cite[Lemma~3.5 and Corollary~3.7]{SW_13}}]\label{Schlichting:universal}
Suppose that $G$ is a \tdlc group and that $\phi: G \rightarrow H$ is a completion map. If $U$ is the preimage of a compact open subgroup of $H$, then there is a unique quotient map $\psi: H \rightarrow G \ssl U$ and closed embedding $\iota: G \ssl U \rightarrow \Sym(G/U)$ such that $\sigma_{(G,U)} = \iota \circ \psi \circ \phi$.
\end{cor}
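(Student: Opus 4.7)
The plan is to derive this corollary directly by assembling Theorem~\ref{thm:completion:sandwich} and Proposition~\ref{prop:sch_description}; essentially no new argument is required, only careful bookkeeping of the identification between $G\ssl U$ and $\overline{\sigma_{(G,U)}(G)}$.

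First I would verify that the hypotheses of Theorem~\ref{thm:completion:sandwich} are met. Since $U = \phi^{-1}(L)$ for some $L \in \U(H)$, and $\phi$ is continuous with dense image, $U$ is open in $G$ and commensurated (because conjugation by any $g \in G$ sends $L$ to a commensurate compact open subgroup of $H$, and preimages preserve commensurability). Hence $(G,U)$ is a Hecke pair and $H$ is a \tdlc completion of $(G,U)$ via $\phi$. Theorem~\ref{thm:completion:sandwich} then supplies a unique continuous quotient map $\psi_2 : H \to G\ssl U$ with compact kernel satisfying $\beta_{G/U} = \psi_2 \circ \phi$.

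Next I would invoke Proposition~\ref{prop:sch_description}, which gives a topological group isomorphism $\psi' : G\ssl U \to \overline{\sigma_{(G,U)}(G)}$ such that $\sigma_{(G,U)} = \psi' \circ \beta_{G/U}$. Under this identification of $G\ssl U$ with its natural image in $\Sym(G/U)$, the map $\beta_{G/U}$ is literally $\sigma_{(G,U)}$. Composing the identities gives
\[
\sigma_{(G,U)} \;=\; \psi' \circ \beta_{G/U} \;=\; \psi' \circ \psi_2 \circ \phi,
\]
so setting $\psi := \psi' \circ \psi_2$ (viewed as a map into $G\ssl U$ via the identification) yields a continuous quotient map $H \to G\ssl U$ with $\sigma_{(G,U)} = \psi \circ \phi$, as required.

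For uniqueness, any two continuous maps $H \to G\ssl U$ satisfying the equation agree on $\phi(G)$, which is dense in $H$, so they coincide. The only point requiring care — and the closest thing to an obstacle — is that $\sigma_{(G,U)}$ has codomain $\Sym(G/U)$ while $\psi$ has codomain $G\ssl U$, so one must consistently identify $G\ssl U$ with $\overline{\sigma_{(G,U)}(G)}$ via the isomorphism of Proposition~\ref{prop:sch_description}; once that is understood, the result follows immediately.
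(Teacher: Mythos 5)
Your proposal is correct and follows exactly the route the paper intends: the paper derives this corollary by combining Theorem~\ref{thm:completion:sandwich} with Proposition~\ref{prop:sch_description}, which is precisely your argument, including the (necessary) identification of $G\ssl U$ with $\overline{\sigma_{(G,U)}(G)}$. Nothing is missing.
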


We next show the Belyaev completion satisfies a stronger universality property.

\begin{thm}\label{Belyaev:universal}
Let $\phi: G \rightarrow H$ be a continuous homomorphism such that $H$ is a \tdlc group.  Suppose that $U$ is a commensurated open subgroup of $G$ such that $\ol{\phi(U)}$ is profinite.  Then there is a unique continuous homomorphism $\psi: \hat{G}_U \rightarrow H$ such that $\phi = \psi \circ \beta_{U}$.  If in addition $\phi$ has dense image and $\ol{\phi(U)}$ is open in $H$, then $\psi$ is a quotient map.
\end{thm}

\begin{proof}
Let $\beta := \beta_{U}$ and set $L: = \ol{\phi(U)}$.  Let $V$ be an open subgroup of $H$.  Then $V\cap L$ is open and of finite index in $L$, since $L$ is compact.  In particular, we see that $W = \phi\inv(V) \cap U$ is an open subgroup of $U$ of finite index.  For all open subgroups $V$ of $H$, there is thus $W \in [U]$ such that $\phi(W) \le V$.  We now obtain the unique continuous homomorphism $\psi: \hat{G}_U \rightarrow H$ such that $\phi = \psi \circ \beta$ via Proposition~\ref{prop:filter_image}.

Now suppose that $\ol{\phi(U)}$ is open in $H$ and that $\phi$ has dense image.  The group $\hat{U}:= \ol{\beta(U)}$ is a compact open subgroup of $\hat{G}_U$, so $\psi(\hat{U})$ is compact in $H$; in particular, $\psi(\hat{U})$ is closed in $H$.  We see that $\psi(\hat{U}) \ge \phi(U)$, so  $\psi(\hat{U})$ is indeed open in $H$.  The image of $\psi$ is therefore an open and dense subgroup of $H$, so $\psi$ is surjective.  Since $H$ is a Baire space, it follows that $\psi$ is a quotient map.
\end{proof}

A standard universal property argument now shows that Theorem~\ref{Belyaev:universal} characterizes the Belyaev completion up to topological isomorphism, so one can take Theorem~\ref{Belyaev:universal} as the definition of the Belyaev completion.

\begin{rmk}
We see that the problem of classifying all continuous homomorphisms with dense image from a specified group $G$ (possibly discrete) to an arbitrary \tdlc group can, in principle, be broken into two steps:
\begin{enumerate}[(1)]
\item Classify the possible sizes of completion; in other words, classify the commensurability classes $\alpha = [U]$ of open subgroups of $G$ that are invariant under conjugation.  (This typically amounts to classifying commensurated open subgroups.)

\item For each such class $\alpha$, form the Belyaev completion $\hat{G}_\alpha$ and classify the quotients of $\hat{G}_\alpha$ with compact kernel.
\end{enumerate}

A number of researchers have already considered $(1)$.  Shalom and Willis classify commensurated subgroups of many arithmetic groups in \cite{SW_13}. Other examples include classifications of commensurated subgroups of almost automorphism groups (\cite{LBW15}) and Burger-Mozes simple universal groups (\cite{LW15}).
\end{rmk}

\section{Invariant properties of completions}\label{sec:invariant}
By Theorem~\ref{thm:completion:sandwich}, the \tdlc completions of a group $G$ of a given size differ only by a compact normal subgroup, so ought to have the same ``non-compact" properties. We here make several precise statements showing that this intuition has substance.

\subsection{First invariant properties}

\begin{prop}\label{prop:top_properties}
Let $G$ be a group and let $H$ be a \tdlc completion of $G$ of size $\alpha$. Then,
\begin{enumerate}[(1)] 
\item $H$ is $\sigma$-compact if and only if $|G:W|$ is countable for some (equivalently, any) $W\in \alpha$; 
\item $H$ is compactly generated if and only if $G$ is generated by finitely many left cosets of $W$ for some (equivalently, any) $W\in \alpha $.
\end{enumerate}
\end{prop}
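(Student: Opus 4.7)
The plan is to exploit two preliminary facts about the completion map $\phi:G\to H$. Let $L\in\U(H)$ be the compact open subgroup with $\phi\inv(L)=U$. First, $\phi(G)L$ is an open subgroup of $H$ containing the dense subgroup $\phi(G)$, so $\phi(G)L=H$; consequently $\phi$ induces a bijection $\ol{\phi}:G/U\to H/L$ sending $gU$ to $\phi(g)L$ (injectivity is immediate from $\phi\inv(L)=U$). Second, $\phi(U)=\phi(G)\cap L$ is the intersection of a dense set with an open set, so it is dense in $L$; in particular $\ol{\phi(U)}=L$.

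For (1), the bijection $\ol{\phi}$ gives $|G:U|=|H:L|$. Since $L$ is compact open, $H$ is $\sigma$-compact if and only if $H/L$ is countable: a $\sigma$-compact cover $H=\bigcup_n K_n$ by compacta forces each $K_n$ to meet only finitely many cosets of the open subgroup $L$, while conversely a countable transversal $(g_n)$ for $L$ in $H$ yields $H=\bigcup_n g_nL$ as a countable union of compact sets. For (2), I use the standard observation that $H$ is compactly generated if and only if $H=\grp{L,h_1,\ldots,h_n}$ for some finite subset $\{h_i\}\subseteq H$: indeed an arbitrary compact generating set is covered by finitely many translates of $L$, while $L\cup\{h_i\}$ is itself compact. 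The forward direction of (2) is then routine: if $G=\grp{U,g_1,\ldots,g_n}$, then $\grp{L,\phi(g_1),\ldots,\phi(g_n)}$ is an open, hence closed, subgroup of $H$ containing the dense set $\phi(G)$, so it equals $H$.

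The main obstacle is the converse direction of (2). Assume $H=\grp{L,h_1,\ldots,h_n}$, and use $\phi(G)L=H$ to write $h_i=\phi(g_i)\ell_i$ with $g_i\in G$ and $\ell_i\in L$; since $\ell_i\in L$ lies in the generating set, this gives $H=\grp{L,\phi(g_1),\ldots,\phi(g_n)}$. Set $G_0:=\grp{U,g_1,\ldots,g_n}$. By density of $\phi(U)$ in $L$ we have $\ol{\phi(G_0)}\supseteq\ol{\phi(U)}=L$, and $\ol{\phi(G_0)}$ contains the $\phi(g_i)$, whence $\ol{\phi(G_0)}\supseteq\grp{L,\phi(g_1),\ldots,\phi(g_n)}=H$; so $\phi(G_0)$ is dense in $H$. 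For any $g\in G$, pick a net $(g_\alpha)\subseteq G_0$ with $\phi(g_\alpha)\to\phi(g)$; eventually $\phi(g_\alpha g\inv)\in L$, whence $g_\alpha g\inv\in\phi\inv(L)=U\subseteq G_0$ and therefore $g\in G_0$. Hence $G=G_0$ is generated by $U$ together with finitely many elements, equivalently by finitely many left cosets of $U$.
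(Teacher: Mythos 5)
Your proof is correct and follows essentially the same route as the paper's: part (1) via the coset correspondence $G/U \to H/L$, and part (2) by trading a compact generating set for finitely many translates of $L$ and pulling everything back through $\phi^{-1}(L)=U$ (where the paper cites an external proposition for the reduction to a finite set of group elements, you simply unpack that argument). One small repair: $\phi(G)L$ is not obviously a subgroup a priori, so you should justify $\phi(G)L=H$ by noting instead that each left coset $hL$ is a nonempty open set and hence meets the dense set $\phi(G)$, which places $h$ in $\phi(G)L$.
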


\begin{proof}
Let $\beta:G\rightarrow H$ be the completion map and $V\in \U(H)$.  By hypothesis, $U := \beta\inv(V)$ is an element of $\alpha$.  

For $(1)$, if $H$ is $\sigma$-compact, then $V$ has only countably many left cosets in $H$. Since $U = \beta\inv(V)$, it follows that $|G:U|$ is countable.  Conversely, suppose that $|G:W|$ is countable for some $W\in \alpha$. It follows that $|G:U|$ is countable. Since $\beta(G)$ is dense in $H$, there are only countably many left cosets of $\ol{\beta(U)}$ in $H$, so $H$ is $\sigma$-compact.

For $(2)$, if $H$ is compactly generated, then since $\beta(G)$ is dense in $H$, there exists a finite symmetric $A\subseteq \beta(G)$ such that $H = \grp{A}V$; see for instance \cite[Proposition~2.4]{W_1_14}.  Say $A = \beta(B)$ for a finite subset $B$ of $G$.  For every $g\in G$, there thus exists $v \in V$ and $g'\in \grp{B}$ such that $\beta(g)=\beta(g')v$. Since $\beta\inv(V)=U$, it follows further that $v=\beta(u)$ for some $u\in U$.  Thus, $\beta(\grp{B,U}) = \beta(G)$, and as $\ker\beta \le U$, we infer that $G = \grp{B,U}$.  In particular, $G$ is generated by finitely many left cosets of $U$.  Conversely, suppose that $G$ is generated by finitely many left cosets of some $W\in \alpha$. It follows that $G$ is generated by finitely many left cosets $b_1U, b_2U, \dots, b_nU$ of $U$. The image $\beta(\bigcup^n_{i=1}b_iU)$ generates a dense subgroup of $H$, and hence the compact subset $X: = \bigcup^n_{i=1}\beta(b_i)\ol{\beta(U)}$ generates a dense open subgroup of $H$ and therefore generates $H$.
\end{proof}

\begin{cor}\label{cor:top_properties}
For each of the following properties, either every completion of a group $G$ of size $\alpha$ has the property, or every completion of $G$ of size $\alpha$ fails to have the property.
\begin{enumerate}[(1)]
\item Being $\sigma$-compact.
\item Being compactly generated.
\end{enumerate}
\end{cor}

We next consider possible quotient groups and amenability.
\begin{prop}\label{prop:completion:invariants}
For each of the following properties, either every completion of a group $G$ of size $\alpha$ has the property, or every completion of $G$ of size $\alpha$ fails to have the property.
\begin{enumerate}[(1)]
\item Having a quotient isomorphic to $N$ where $N$ is any specified \tdlc group that has no non-trivial compact normal subgroups.
\item Being amenable.
\end{enumerate}
\end{prop}

\begin{proof}
For $i\in \{1,2\}$, let $\phi_i: G \rightarrow H_i$ be a completion map of size $\alpha$ and let $U_i$ be the preimage under $\phi_i$ of a compact open subgroup of $H_i$.  The subgroup $U_i$ is a member of $\alpha$, so by Theorem~\ref{thm:completion:sandwich}, there are quotient maps $\pi_i: \hat{G}_U \rightarrow H_i$ with compact kernel.  It therefore suffices to show that for any \tdlc group $H$ and compact normal subgroup $K$ of $H$, the group $H$ has the property if and only if $H/K$ does.

For $(1)$, if $\pi: H \rightarrow N$ is a quotient map, then $\pi(K)$ is a compact normal subgroup of $N$. Since $N$ has no non-trivial compact normal subgroup, we deduce that $K \le \ker\pi$, so $N$ is a quotient of $H$ if and only if $N$ is a quotient of $H/K$. 

For $(2)$, recall that every compact subgroup is amenable and that if $L$ is a closed normal subgroup of the locally compact group $H$, then $H$ is amenable if and only if both $H/L$ and $L$ are amenable.  Since $K$ is compact, we deduce that $H$ is amenable if and only if $H/K$ is amenable.
\end{proof}

Let us now consider topological countability axioms. These are more delicate and depend on the choice of $G$-stable local filter, as opposed to only the size.

\begin{prop}\label{prop:first_countable} 
Suppose $G$ is a topological group and $\mc{S}$ is a $G$-stable local filter. Then
$\hat{G}_{\mc{S}}$ is first countable if and only if the set $\{V \in \mc{S} \mid V \le U\}$ is countable, for some (equivalently, any) $U \in \mc{S}$.
\end{prop}
\begin{proof}
Fix $U \in \mc{S}$.  Let $\beta: G \rightarrow \hat{G}_{\mc{S}}$ be the completion map and set $\mc{S}_U := \{V \in \mc{S} \mid V \le U\}$.  By Theorem~\ref{thm:completion:def}, we have $|\mc{S}_U|= |\mc{O}|$ where $\mc{O}$ is the set of open subgroups of $U$.  If $\mc{S}_U$ is countable, then $\mc{O}$ is a countable base of identity neighborhoods, so $\hat{G}_{\mc{S}}$ is first countable.  Conversely if $\hat{G}_{\mc{S}}$ is first countable, then there is a countable base $\mc{B}$ of identity neighborhoods consisting of open subsets of $U$.  Since $U$ is profinite, each $B \in \mc{B}$ contains a subgroup of $U$ of finite index, so there are only finitely many open subgroups of $U$ that contain $B$.  Hence, $\mc{O}$ is countable, implying that $\mc{S}_U$ is countable.
\end{proof}

\begin{prop}\label{prop:tdlcsc_completion}
Let $G$ be a topological group and $\mc{S}$ be a $G$-stable local filter.  Then $\hat{G}_{\mc{S}}$ is second countable if and only if $\{V \in \mc{S} \mid V \le U\}$ and $|G:U|$ are countable, for some (equivalently, any) $U \in \mc{S}$.
\end{prop}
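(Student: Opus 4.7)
The plan is to prove this by combining Proposition~\ref{prop:first_countable} with Proposition~\ref{prop:top_properties}(1), using the standard principle that a locally compact Hausdorff group is second countable precisely when it is both first countable and $\sigma$-compact.

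To justify that principle in the tdlc setting --- which is all that is needed here, since $\hat{G}_{\mc{S}}$ is tdlc by Proposition~\ref{prop:belyaev:def} --- one can argue directly: a first countable tdlc group admits a countable descending basis $(V_n)_{n\in \mb{N}}$ of compact open subgroups at the identity, and if the group is additionally $\sigma$-compact then each $V_n$ has countable index, so a countable transversal $T_n$ of $V_n$ exists and $\{tV_n : n \in \mb{N},\; t \in T_n\}$ is a countable basis for the topology. The converse is immediate: any second countable space is first countable, and any second countable locally compact Hausdorff space is $\sigma$-compact by covering with countably many basis sets of compact closure.

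With this reduction, Proposition~\ref{prop:first_countable} applied to $(G,U)$ and $\mc{S}$ directly gives that $\hat{G}_{\mc{S}}$ is first countable if and only if $\{V \in \mc{S} \mid V \le U\}$ is countable. For $\sigma$-compactness, one mild point is that $U$ need not belong to $\mc{S}$, so $\hat{G}_{\mc{S}}$ need not literally be a tdlc completion of the pair $(G,U)$; however, $\mc{S}$ is nonempty, so one picks any $W \in \mc{S}$, observes that $\hat{G}_{\mc{S}}$ is a tdlc completion of the Hecke pair $(G,W)$, and applies Proposition~\ref{prop:top_properties}(1) to conclude that $\hat{G}_{\mc{S}}$ is $\sigma$-compact if and only if $|G:W|$ is countable. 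Since $W$ and $U$ are commensurate, $|G:W|$ is countable if and only if $|G:U|$ is countable, yielding the claimed equivalence. There is no real obstacle beyond this routine bookkeeping.
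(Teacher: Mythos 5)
Your proof is correct and takes essentially the same route as the paper, which likewise deduces the statement from Propositions~\ref{prop:first_countable} and~\ref{prop:top_properties} via the fact (cited there from \cite[(5.3)]{K95}) that a locally compact group is second countable if and only if it is first countable and $\sigma$-compact. Your extra bookkeeping in replacing $U$ by some $W \in \mc{S}$ (since $U$ itself need not lie in $\mc{S}$) addresses a small point the paper's one-line proof glosses over, but it does not change the argument.
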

\begin{proof}
Via \cite[(5.3)]{K95}, a locally compact group is second countable if and only if it is $\sigma$-compact and first countable. The proposition now follows from Propositions~\ref{prop:first_countable} and \ref{prop:top_properties}.
\end{proof}

\begin{cor}\label{cor:Sch_tdlcsc} 
If $(G,U)$ is a Hecke pair such that $|G:U|$ is countable, then the Schlichting completion $G\ssl U$ is a \tdlcsc group.
\end{cor}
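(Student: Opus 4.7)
The plan is to invoke Proposition~\ref{prop:tdlcsc_completion} applied to the Schlichting filter $\mc{S}_{G/U}$. That proposition reduces the problem to two countability checks: $|G:U|$ should be countable, and the set $\mc{S}^U_{G/U}:=\{V\in\mc{S}_{G/U}\mid V\le U\}$ should be countable. The first is trivial since $G$ itself is countable.

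For the second, I would exploit the explicit definition of the Schlichting filter. Every $V\in\mc{S}^U_{G/U}$ contains some finite intersection of $G$-conjugates of $U$, that is, a subgroup of the form $K(F):=\bigcap_{g\in F}gUg\inv$ for some finite $F\subseteq G$. First I would observe that since $G$ is countable, the collection $\mathcal{K}:=\{K(F)\mid F\subseteq G\text{ finite}\}$ is countable. Next, because $U$ is commensurated, each $K(F)$ has finite index in $U$, and hence $K(F)$ has only finitely many overgroups inside $U$. Therefore
\[
\mc{S}^U_{G/U}\;\subseteq\;\bigcup_{K\in\mathcal{K}}\{V\mid K\le V\le U\},
\]
a countable union of finite sets, which is countable.

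Combining these two points with Proposition~\ref{prop:tdlcsc_completion} yields the conclusion. The only nontrivial ingredient is the observation about finite-index overgroups, and that is immediate from the commensurability of $U$ with each $gUg\inv$; I do not expect any real obstacle, as the entire argument is just bookkeeping on top of the reduction already provided by Proposition~\ref{prop:tdlcsc_completion}.
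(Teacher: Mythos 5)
Your argument is correct and follows exactly the route the paper intends: the authors omit the proof as ``straightforward,'' pointing to Proposition~\ref{prop:tdlcsc_completion} and the remark that countability of $G$ handles $|G:U|$, and your counting of $\{V\in\mc{S}_{G/U}\mid V\le U\}$ (noting that any $V\le U$ containing $K(F)=\bigcap_{g\in F}gUg^{-1}$ forces $K(F)\le U$ of finite index, with at most $2^{[U:K(F)]}$ intermediate subgroups) is the right way to fill in the omitted step.
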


\subsection{Elementary groups}
Let us now consider a more complicated algebraic property, the property of being an elementary group. 

\begin{defn}\label{def:elementary}The class $\Es$ of \textbf{elementary groups} is the smallest class of \tdlcsc groups such that
	\begin{enumerate}[(i)]
		\item $\Es$ contains all second countable profinite groups and countable discrete groups.
		\item $\Es$ is closed under taking closed subgroups.
		\item $\Es$ is closed under taking Hausdorff quotients.
		\item $\Es$ is closed under forming group extensions.
		\item If $G$ is a \tdlcsc group and $G=\bigcup_{i\in \Nb}O_i$ where $(O_i)_{i\in \Nb}$ is an $\subseteq$-increasing sequence of open subgroups of $G$ with $O_i\in\Es$ for each $i$, then $G\in\Es$. We say that $\Es$ is \textbf{closed under countable increasing unions}.
	\end{enumerate}
\end{defn} 
The operations $\mathrm{(ii)}$-$\mathrm{(v)}$ are often called the \textbf{elementary operations}. It turns out operations $\mathrm{(ii)}$ and $\mathrm{(iii)}$ follow from the others, and $\mathrm{(iv)}$ can be weakened to $\mathrm{(iv)}'$: \textit{$\Es$ is closed under extensions of profinite groups and discrete groups}. These results are given by \cite[Theorem 1.3]{W_1_14}. 

\begin{rmk} If $G$ is a \tdlcsc group that is non-discrete, compactly generated, and topologically simple, then $G$ is \emph{not} a member of $\Es$.  The class $\Es$ is thus strictly smaller than the class of all \tdlcsc groups.
\end{rmk}

The class of elementary groups comes with a canonical \textit{successor ordinal} valued rank called the \defbold{decomposition rank} and denoted by  $\xi(G)$; see \cite[Section 4]{W_1_14}. The key property of the decomposition rank that we will exploit herein is that it is well-behaved under applying natural group building operations. For a \tdlc group $G$, recall that the \defbold{discrete residual} of $G$ is defined to be $\Res{}(G):=\bigcap \{O\normal G\mid O\text{ open}\}$.

\begin{prop}\label{prop:xi_monotone}
For $G$ a non-trivial elementary group, the following hold.
\begin{enumerate}[(1)]
 \item If $H$ is a \tdlcsc group, and $\psi:H\rightarrow G$ is a continuous, injective homomorphism, then $H$ is elementary with $\xi(H)\leq \xi(G)$. \textup{(\cite[Corollary 4.10]{W_1_14})}
 
\item If $L\normal G$ is closed, then $\xi(G/L)\leq \xi(G)$. \textup {(\cite[Theorem 4.19]{W_1_14})}

\item If $G=\bigcup_{i\in \Nb}O_i$ with $(O_i)_{i\in \Nb}$ an $\subseteq$-increasing sequence of compactly generated open subgroups of $G$, then $\xi(G)=\sup_{i\in \Nb}\xi(\Res{}(O_i))+1$. If $G$ is compactly generated, then $\xi(G)=\xi(\Res{}(G))+1$. \textup{(\cite[Lemma 4.12]{W_1_14})}

\item If $G$ is residually discrete, then $G$ is elementary with $\xi(G)\leq 2$. \textup{(\cite[Observation 4.11]{W_1_14})}

\item If $G$ is elementary and lies in a short exact sequence of topological groups
\[
\triv\rightarrow N\rightarrow G\rightarrow Q\rightarrow \triv,
\]
then $\xi(G)\leq (\xi(N)-1)+\xi(Q)$.  (Here $(\xi(N) - 1)$ denotes the predecessor of $\xi(N)$, which exists as $\xi(N)$ is a successor ordinal.)  \textup{(\cite[Lemma 3.8]{RW_DenseLC_15})}
\end{enumerate}
\end{prop}

\begin{prop}\label{prop:completion:elementary_rank}
Let $G$ be a group.
\begin{enumerate}[(1)]
\item Either every second countable completion of $G$ of size $\alpha$ is elementary, or all second countable completions of $G$ of size $\alpha$ are non-elementary.

\item If every second countable completion of size $\alpha$ is elementary, then for any two second countable completions $H$ and $L$ of $G$ with size $\alpha$, we have
\[
\xi(L)\leq 1+\xi(H).
\]

\item If some second countable completion of size $\alpha$ is elementary with transfinite rank $\beta$, then every second countable completion of size $\alpha$ is elementary with rank $\beta$.
\end{enumerate}
\end{prop}

\begin{proof}
Let $H$ and $L$ be second countable completions of $G$ of size $\alpha$.  By Theorem~\ref{thm:completion:characterization} we may assume $H = \hat{G}_{\mc{S}_1}$ and $L = \hat{G}_{\mc{S}_2}$ where $\mc{S}_1, \mc{S}_2 \subseteq \alpha$ are $G$-stable local filters.  Let $\mc{S}_3$ be the smallest $G$-stable local filter containing $\mc{S}_1 \cup \mc{S}_2$.  Via Proposition~\ref{prop:tdlcsc_completion}, $\{V \in \mc{S}_i \mid V \le U\}$  is countable for $i \in \{1,2\}$ and $U \in \mc{S}_i$. It follows that  $\{V \in \mc{S}_3 \mid V \le U\}$ is countable for $U \in \mc{S}_1$, and hence $\hat{G}_{\mc{S}_3}$ is first countable.  By Corollary~\ref{cor:top_properties}(1), $\hat{G}_{\mc{S}_3}$ is also $\sigma$-compact and hence second countable.

By Proposition~\ref{prop:completions:comparison}, both $\hat{G}_{\mc{S}_1}$ and $\hat{G}_{\mc{S}_2}$ are quotients of $\hat{G}_{\mc{S}_3}$ with compact kernel.  It follows that $\hat{G}_{\mc{S}_i}$ is elementary for $i \in \{1,2\}$ if and only if $\hat{G}_{\mc{S}_3}$ is elementary; in particular, it is not possible for $\hat{G}_{\mc{S}_1}$ to be elementary and $\hat{G}_{\mc{S}_2}$ to be non-elementary.  This proves $(1)$.  Moreover, if $\hat{G}_{\mc{S}_3}$ is elementary, then via Proposition~\ref{prop:xi_monotone} we obtain the inequalities
\[
\xi(\hat{G}_{\mc{S}_i}) \le \xi(\hat{G}_{\mc{S}_3}) \le  1+ \xi(\hat{G}_{\mc{S}_i}) \quad (i \in \{1,2\}).
\]
In particular, if $\xi(\hat{G}_{\mc{S}_3})$ is finite then $\xi(\hat{G}_{\mc{S}_1}),\xi(\hat{G}_{\mc{S}_2}) \in \{\xi(\hat{G}_{\mc{S}_3}) - 1,\xi(\hat{G}_{\mc{S}_3})\}$, and if $\xi(\hat{G}_{\mc{S}_3})$ is transfinite then it is equal to both $\xi(\hat{G}_{\mc{S}_1})$ and $\xi(\hat{G}_{\mc{S}_2})$.  This proves $(2)$ and $(3)$.
\end{proof}

It can be the case that there are \textit{no} second countable completions of a given size. In light of Corollary~\ref{cor:Sch_tdlcsc}, however, a group $G$ has a second countable completion of size $\alpha$ if and only if $|G:U|$ is countable for some $U\in \alpha$. With this in mind, we make the following definition.

\begin{defn} 
A size $\alpha$ of a group $G$ is called \textbf{elementary} if there is $U\in \alpha$ such that $|G:U|$ is countable and some (all) second countable completions are elementary. A Hecke pair $(G,U)$ is called \defbold{elementary} if $|G:U|$ is countable and some (all) \scou completions are elementary.
\end{defn}

\subsection{The scale function and flat subgroups}
We conclude this section by considering the scale function and flat subgroups in relation to completions; these concepts were introduced in \cite{Will94} and \cite{WillisFlat} respectively, although the term ``flat subgroup" is more recent.

\begin{defn}
For $G$ a \tdlc group, the \defbold{scale function} $s:G\rightarrow \Zb$ is defined by
\[
s(g):=\min\{|gUg\inv:gUg\inv \cap U|\mid U\in \U(G)\}.
\]
A compact open subgroup $U$ of $G$ is \defbold{tidy} for $g\in G$ if it achieves $s(g)$.  We say $g$ is \defbold{uniscalar} if $s(g) = s(g\inv) = 1$.

A subset $X$ of $G$ is \defbold{flat} if there exists a compact open subgroup $U$ of $G$ such that for all $x \in X$, the subgroup $U$ is tidy for $x$; in this case, we say $U$ is tidy for $X$. If $X$ is a finitely generated flat subgroup, the \defbold{rank} of $X$ is the least number of generators for the quotient group $X/\{x\in X\mid s(x)=1\}$.
\end{defn}

The scale function and flatness are clearly locally compact non-compact phenomena. In relation to \tdlc completions, they only depend on the size $[U]$.

\begin{prop}\label{prop:completion:tidy}
For $\phi: G \rightarrow H$ a \tdlc completion of size $[U]$, the following hold:
\begin{enumerate}[(1)]
\item For $\hat{s}$ and $s$ the scale functions for $\hat{G}_U$ and $H$, $s\circ \phi=\hat{s}\circ \beta_U$. 

\item For $X\subseteq G$, the subset $\phi(X)$ is flat if and only if $\beta_U(X)$ is flat.

\item If $K\leq G$ is a finitely generated subgroup, then $\phi(K)$ is flat with rank $k$ if and only if $\beta_U(K)$ is flat with rank $k$.
\end{enumerate}
\end{prop}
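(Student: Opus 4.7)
The plan is to reduce the proposition to a key fact about quotients of \tdlc groups by compact normal subgroups, and then apply it to the quotient map $\psi_1:\hat{G}_U\rightarrow H$ with compact kernel $N$ furnished by Theorem~\ref{thm:completion:sandwich}, noting that $\phi=\psi_1\circ \beta_U$.

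The key fact I would establish first is: for any \tdlc group $L$ with compact normal subgroup $N$ and quotient map $\pi:L\rightarrow L/N$, we have $s_L(x)=s_{L/N}(\pi(x))$ for every $x\in L$, and a compact open subgroup $V\leq L$ containing $N$ is tidy for $x$ if and only if $\pi(V)$ is tidy for $\pi(x)$. For any compact open $V\supseteq N$, the subgroups $xVx^{-1}$ and $xVx^{-1}\cap V$ contain $N$, hence
\[
|xVx^{-1}:xVx^{-1}\cap V|=|\pi(x)\pi(V)\pi(x)^{-1}:\pi(x)\pi(V)\pi(x)^{-1}\cap \pi(V)|,
\]
so, using the bijection between compact open subgroups of $L/N$ and compact open subgroups of $L$ containing $N$, the scale $s_{L/N}(\pi(x))$ equals the minimum of the left-hand side over all compact open $V\supseteq N$. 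To show this agrees with $s_L(x)$ (the minimum over all compact open $V$), I would prove the inequality
\[
|xVNx^{-1}:xVNx^{-1}\cap VN|\leq |xVx^{-1}:xVx^{-1}\cap V|
\]
for every compact open $V\leq L$, which follows from the surjective map of coset spaces $xVx^{-1}V/V\twoheadrightarrow xVx^{-1}VN/VN$ (using normality of $N$) combined with the standard identity $|gUg^{-1}:gUg^{-1}\cap U|=|gUg^{-1}U:U|$. As a byproduct, if $V$ is tidy for $x$ then so is $VN$, which gives the tidiness correspondence.

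Given the key fact, I would specialize to $\psi_1:\hat{G}_U\rightarrow H$ with $N=\ker\psi_1$ to deduce the three parts. Part $(1)$ is immediate from $\hat{s}(\beta_U(g))=s(\psi_1\beta_U(g))=s(\phi(g))$. For part $(2)$, if $\beta_U(X)$ is flat with witness $\hat{V}$, I would replace $\hat{V}$ by $\hat{V}N$ (still a tidy witness for every $x\in\beta_U(X)$ by the key fact) and descend via $\psi_1$ to get a tidy witness $\psi_1(\hat{V}N)$ for $\phi(X)$; conversely, pulling a tidy witness for $\phi(X)$ back along $\psi_1$ furnishes a tidy witness for $\beta_U(X)$. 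For part $(3)$, the restriction of $\psi_1$ to $\beta_U(K)$ descends to a homomorphism $\beta_U(K)/\{x:\hat{s}(x)=1\}\rightarrow \phi(K)/\{y:s(y)=1\}$ (both quotients are well-defined groups since in a flat set $s(x)=1$ is equivalent to $xVx^{-1}=V$ for a common tidy witness $V$, using that an inclusion of compact open subgroups of equal Haar measure is an equality); this homomorphism is surjective since $\psi_1$ is surjective on $\beta_U(K)$, and has trivial kernel by part $(1)$, hence is an isomorphism, so the two rank-quotient groups have the same minimum number of generators. The main technical obstacle is the index inequality underpinning the key fact; once that is in hand, the remainder of the argument is bookkeeping along $\psi_1$.
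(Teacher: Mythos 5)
Your proposal is correct and follows the same overall route as the paper: both factor $\phi = \psi_1 \circ \beta_U$ through the compact-kernel quotient map supplied by Theorem~\ref{thm:completion:sandwich} and then transfer the scale, tidiness, and the rank quotient along $\psi_1$. The one substantive difference is that the paper simply cites an external result (Lemma~4.9 of \cite{ReidFlat}) for the key fact that the scale and tidy subgroups are preserved under quotients by compact normal subgroups, whereas you prove this from scratch via the index inequality $|xVNx^{-1}:xVNx^{-1}\cap VN|\leq |xVx^{-1}:xVx^{-1}\cap V|$; your argument for that inequality (the surjection of coset spaces together with $x(VN)x^{-1}(VN)=xVx^{-1}VN$ and the identity $|gUg^{-1}:gUg^{-1}\cap U|=|gUg^{-1}U:U|$) is sound, so your write-up is actually more self-contained. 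One small caveat: your parenthetical claim that, in a flat set with common tidy witness $V$, the condition $s(x)=1$ is equivalent to $xVx^{-1}=V$ is not quite right as stated --- $s(x)=1$ only gives $xVx^{-1}\subseteq V$, and equality requires $s(x)=s(x\inv)=1$ (consider $t$ in $\Qp\rtimes\grp{t}$). This does not damage your main argument for part~(3), since the isomorphism of the two rank quotients follows from part~(1) exactly as you say, and the paper itself takes the well-definedness of the quotient $X/\{x\mid s(x)=1\}$ for granted in its definition of rank.
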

\begin{proof}
By Theorem~\ref{thm:completion:sandwich}, we can factorize $\phi$ as $\phi =  \pi\circ \beta_U$, where $\pi:\hat{G}_U\rightarrow H$ is a quotient map with compact kernel. The result \cite[Lemma~4.9]{ReidFlat} ensures that $s \circ \pi = \hat{s}$, hence $s \circ \phi = \hat{s} \circ \beta_U$, proving $(1)$.

Appealing again to \cite[Lemma~4.9]{ReidFlat}, if $U$ is tidy for $g$ in $\hat{G}_{U}$, then $\pi(U)$ is tidy for $\pi(g)$ in $H$, and conversely if $V$ is tidy for $\pi(g)$ in $H$, then $\pi\inv(V)$ is tidy for $g$ in $\hat{G}_{U}$.  Therefore, if $\beta_U(X)$ has a common tidy subgroup, then so does $\phi(X)$.  Conversely, if $\phi(X)$ has a common tidy subgroup $V$ in $H$, then $\pi\inv(V)$ is a common tidy subgroup for $\beta_U(X)$.  We conclude that $\phi(X)$ has a common tidy subgroup if and only if $\beta_U(X)$ does, verifying $(2)$.

Finally, if $K$ is a subgroup of $G$, then $\phi(K)$ is a flat subgroup of $H$ if and only if $\beta_U(K)$ is a flat subgroup of $\hat{G}_{U}$ by $(2)$.  The rank of $\phi(K)$ is the number of generators of the factor $\phi(K)/L_{H}$ where $L_H := \{x\in \phi(K)\mid s(x)=1\}$. Letting $L_{\hat{G}_U}$ be the analogous subgroup of $\hat{G}_U$, it follows from $(1)$ that the map $\pi$ induces an isomorphism $\tilde{\pi}:\beta_U(K)/L_{\hat{G}_U}\rightarrow \phi(K)/L_H$. We conclude that $\beta_U(K)$ has rank $k$ if and only if $\phi(K)$ has rank $k$, proving $(3)$.
\end{proof}

The next corollary is immediate from Proposition~\ref{prop:completion:tidy} and the fact the scale function is continuous (see \cite{Will94}).
\begin{cor} 
For $G$ a group and $U$ a commensurated open subgroup of $G$, either all \tdlc completions of $G$ of size $\alpha$ are uniscalar, or no completion  of size $\alpha$ is uniscalar.
\end{cor}

\section{Completions compatible with homomorphisms}

For an injective homomorphism $\theta: G \rightarrow L$, we may wish to find a \tdlc completion $\widetilde{G}$ of $G$ such that $\theta$ extends to an injective homomorphism from $\widetilde{G}$ to $L$.  More precisely, we say the \tdlc completion map $\beta: G \rightarrow \widetilde{G}$ is \defbold{compatible with $\theta$} if there is a continuous injective homomorphism $\psi: \widetilde{G} \rightarrow L$ such that $\theta = \psi \circ \beta$; note that in this case $\psi$ is necessarily unique.  Here we do not insist that $L$ be locally compact; indeed, in many interesting examples, $L$ itself will not be locally compact (see Remark~\ref{rmk:Polish} below).  We can characterize the \tdlc completions compatible with $\theta$ in terms of commensurated subgroups.

\begin{thm}\label{thm:compatible_completion}
Let $\theta: G \rightarrow L$ be an injective continuous homomorphism of topological groups.

\begin{enumerate}[(1)]
\item Suppose that $H$ is an open commensurated subgroup of $G$ such that the closure $\ol{\theta(H)}$ of $\theta(H)$ in $L$ is profinite and set $H^* := \theta\inv(\ol{\theta(H)})$.  Then $H^*$ is open and commensurated in $G$, and there is a \tdlc completion map $\beta: G \rightarrow \hat{G}_{H,\theta}$ compatible with $\theta$ such that $H^*$ is the preimage of a compact open subgroup of $\hat{G}_{H,\theta}$.  Moreover, $\beta$ is unique up to isomorphisms of $\hat{G}_{H,\theta}$ and is determined by the pair $([H^*],\theta)$.

\item Suppose that $\beta: G \rightarrow \widetilde{G}$ is a \tdlc completion of $G$ compatible with $\theta$ and $\psi:\widetilde{G}\rightarrow L$ is such that $\theta=\psi\circ \beta$, let $U$ be a compact open subgroup of $\widetilde{G}$, and set $H: = \beta\inv(U)$.  Then $H$ is a commensurated subgroup of $G$ that is the preimage of a profinite subgroup $\ol{\theta(H)} = \psi(U)$ of $L$, and $\widetilde{G} \simeq \hat{G}_{H,\theta}$.
\end{enumerate}
\end{thm}

\begin{proof}
For (1), $H$ is an open commensurated subgroup of $G$ such that the closure $K = \ol{\theta(H)}$ of $\theta(H)$ in $L$ is profinite. 
The image $\theta(G)$ additionally commensurates $K$; consider, for instance, \cite[Lemma 2.7]{LBW15}. We thus conclude that $H^* := \theta\inv(K)$ is commensurated in $G$.  Now let $\mc{R}$ be the set of closed subgroups of $L$ that are commensurate with $K$ and let $\mc{S}$ be the set of $\theta$-preimages of elements of $\mc{R}$. The collection $\mc{S}$ forms a $G$-stable local filter, and setting $\hat{G}_{H,\theta} := \hat{G}_{\mc{S}}$, we obtain a \tdlc completion $\beta: G \rightarrow \hat{G}_{H,\theta}$.

Let $L'$ be the group $\langle \theta(G),K\rangle$, equipped with the unique group topology such that the inclusion of $K$ into $L'$ is continuous and open.  The map $\theta$ induces a continuous homomorphism $\theta'$ from $G$ to $L'$, and Theorem~\ref{thm:completion:characterization} provides a unique topological group isomorphism $\psi': \hat{G}_{H,\theta} \rightarrow L'$ such that $\theta' = \psi' \circ \beta$.  In particular, since the natural inclusion of $L'$ into $L$ is continuous, we obtain a continuous injective homomorphism $\psi:\hat{G}_{H,\theta} \rightarrow L$ such that $\theta = \psi \circ \beta$.  Thus $\beta$ is compatible with $\theta$.  It is also clear that given $\theta$, the construction of $\beta$ is determined by the commensurability class of $K$ among closed subgroups of $L$, and hence by the commensurability class of $H^*$, since $K = \ol{\theta(H^*)}$ and the mapping $\cdot \mapsto \ol{\theta(\cdot)}$ preserves commensurability classes of subgroups.

To see that $\beta$ is unique up to isomorphisms of the range, Theorem~\ref{thm:completion:characterization} ensures that it is enough to show the following: given a \tdlc completion $\beta_{(G,\mc{T})}: G \rightarrow \hat{G}_{\mc{T}}$ that is compatible with $\theta$, where $\mc{T}$ is a $G$-stable local filter and $H^* \in \mc{T}$, then $\mc{T} = \mc{S}$.  Suppose $\psi_2$ is the injective continuous homomorphism from $\hat{G}_{\mc{T}}$ to $L$ such that $\theta = \psi_2 \circ \beta_{(G,\mc{T})}$.  The collection $\mc{T}$ is the set of $\beta_{(G,\mc{T})}$-preimages of compact open subgroups of $\hat{G}_{\mc{T}}$. The images of the compact open subgroups of $\hat{G}_{\mc{T}}$ give rise to a collection $\mc{R}'$ of compact subgroups of $L$, so $\mc{T}$ is the set of $\theta$-preimages of elements of $\mc{R}'$.  We see that $K \in \mc{R'}$ and that all elements of $\mc{R}'$ are commensurate with $K$, so $\mc{R}' \subseteq \mc{R}$ and hence $\mc{T} \subseteq \mc{S}$.  The argument that $\mc{S} \subseteq \mc{T}$ is similar.

\medskip

For (2), $U$ is commensurated in $\widetilde{G}$, so $H$ is commensurated in $G$.  Let $K := \psi(U)$.  Since $\psi$ is injective and continuous and $U$ is compact, we see that $K$ is closed in $L$ and isomorphic to $U$ as a topological group; in particular, $K$ is profinite.  The injectivity of $\psi$ ensures that $U = \psi\inv(K)$, so $H = \beta\inv\psi\inv(K) = \theta\inv(K)$.  Since $G$ has dense image in $\widetilde{G}$, we see that $\beta(H)$ is dense in $U$ and hence $\theta(H)$ is dense in $K$.  The fact that $\widetilde{G} \simeq \hat{G}_{H,\theta}$ follows from the uniqueness result established in part (1).
\end{proof}

To illustrate the theorem, let us spell out what it means for certain classes of action.  Given a group $G$ acting faithfully on a structure $X$, a \defbold{\tdlc completion} of the action is a faithful action of a \tdlc group $\widetilde{G}$ on the same structure such that $\widetilde{G}$ contains a dense copy of $G$ with its original action.  Say that a unitary representation $X$ of a group $H$ is \defbold{locally finite} if $X$ is the closure of the union of an increasing family $(X_i)$ of finite-dimensional subrepresentations, such that the kernel of the action of $H$ on $X_i$ has finite index for each $i$.

\begin{cor}\label{cor:action_extension}
For $G$ a group with $H$ a commensurated subgroup, suppose that one of the following hold:
\begin{enumerate}[(1)]
\item $G$ acts faithfully by permutations on a set $X$, and $H$ has finite orbits on $X$.
\item $G$ acts faithfully by homeomorphisms on a compact metrizable zero-dimensional space $X$, and the action of $H$ on $X$ is equicontinuous.
\item $X$ is a faithful complex unitary representation of $G$, and $X$ is locally finite as a representation of $H$.
\end{enumerate}
Then there is a unique \tdlc completion $\widetilde{G} \curvearrowright X$ of the action such that $H$ has compact open closure in $\widetilde{G}$, which is continuous in cases (1) and (2), and strongly continuous in case (3).  Moreover, all \tdlc completions of the action of $G$ on $X$ with the given continuity property arise in this way.
\end{cor}

\begin{proof}
For $(1)$, consider $G$ and $H$ as subgroups of $\Sym(X)$ with the permutation topology.  As is well-known, a subgroup $H$ of $\Sym(X)$ has compact closure if and only if it has finite orbits.  Furthermore, any topological permutation group that acts continuously on $X$ must map continuously into $\Sym(X)$, and conversely, $\Sym(X)$ itself acts continuously on $X$.

For $(2)$, consider $G$ and $H$ as subgroups of $\Homeo(X)$ with the compact-open topology.  By the Arzel\`{a}--Ascoli theorem, the condition that $H$ be equicontinuous on $X$ is exactly the condition that $H$ has compact closure in $\Homeo(X)$.  If $\widetilde{G}$ is a group acting faithfully by homeomorphisms on $X$, then the corresponding homomorphism from $\widetilde{G}$ to $\Homeo(X)$ is continuous if and only if the action of $\widetilde{G}$ on $X$ is continuous.

For $(3)$, consider $G$ and $H$ as subgroups of the unitary group $\mathrm{U}(X)$ with the strong operator topology and let $\ol{H}$ denote the closure of $H$ in $\mathrm{U}(H)$.  Suppose that $X$ is locally finite as a representation of $H$, with $(X_i)$ the corresponding increasing family of finite-dimensional subrepresentations.  For each $i$, $\ol{H}$ acts on $X_i$ with an open kernel of finite index, so $\ol{H}$ is totally disconnected.  In addition, given a net $(h_j)_{j \in J}$ in $\ol{H}$, there is a subnet $(h_{j(k)})$ that is eventually constant on each $X_i$. It follows that $(h_{j(k)})$ converges pointwise on $X$ to a unitary map; in other words, $h_{j(k)}$ converges in $\ol{H}$.  Thus $\ol{H}$ is compact and hence a profinite group.  Conversely, suppose $H$ is a subgroup of $G$ with compact closure in $\mathrm{U}(X)$.  By standard results (see for instance \cite[Theorem~5.2]{Folland}), $X$ is an orthogonal sum of finite-dimensional irreducible representations $X_j$ of $\ol{H}$, and on each $X_j$, $\ol{H}$ acts as a compact Lie group.  If $\ol{H}$ is profinite, then the Lie quotients of $\ol{H}$ are in fact finite, so $H$ acts on $X_j$ with a kernel of finite index.  We conclude that $X$ is a locally finite representation of $\ol{H}$ and hence of $H$.  Therefore, $H$ has profinite closure in $\mathrm{U}(X)$ if and only if $X$ is locally finite as a representation of $H$.

In all cases, the conclusions now follow by Theorem~\ref{thm:compatible_completion}.
\end{proof}

\begin{rmk}\label{rmk:Polish}
If $X$ is a countably infinite set, the Cantor set, or the infinite-dimensional separable complex Hilbert space, then $\Sym(X)$, $\Homeo(X)$, and $\mathrm{U}(X)$ respectively are well-known examples of Polish groups that are not locally compact.  As such, simply taking the closure of the image of $\psi:G\rightarrow L$, with $L$ one of the aforementioned groups,  will not always produce a locally compact group, and moreover, there are interesting examples of continuous actions of \tdlc groups that do not arise from taking the closure in $L$.  For example, Thompson's group $V$ acts faithfully by homemorphisms on the standard ternary Cantor set $X \subset [0,1]$ and has a commensurated subgroup $H$ consisting of those elements of $V$ that act by isometries of the visual metric. In particular, the action of $H$ is equicontinuous. There is thus a unique \tdlc completion $\widetilde{V} \curvearrowright X$ of the action on $X$ such that the closure of $H$ in $\widetilde{V}$ is a compact open subgroup.  The group $\widetilde{V}$ is known as Neretin's group $\mc{N}_{2,2}$ of piecewise homotheties of the ternary Cantor set, and it carries a strictly finer topology than the one induced by $\Homeo(X)$.
\end{rmk}


\bibliographystyle{abbrv}
\bibliography{biblio}{}

\end{document}